\newtheorem*{rep@theorem}{\rep@title}\newcommand{\newreptheorem}[2]{%
\newenvironment{rep#1}[1]{%
\def\rep@title{\bf #2 \ref{##1}}%
\begin{rep@theorem}}%
{\end{rep@theorem}}}
\newtheorem{theorem}{Theorem}[section]
\newtheorem{proposition}[theorem]{Proposition}
\newtheorem{conjecture}[theorem]{Conjecture}
\newtheorem{lemma}[theorem]{Lemma}
\newtheorem{corollary}[theorem]{Corollary}
\theoremstyle{definition}
\newtheorem{remark}[theorem]{Remark}
\newcommand{\Q}{\mathbb{Q}}
\newcommand{\Z}{\mathbb{Z}}
\newcommand{\C}{\mathbb{C}}
\newcommand{\N}{\mathbb{N}}
\DeclareMathOperator{\Hilb}{Hilb}
\DeclareMathOperator{\Frob}{Frob}
\DeclareMathOperator{\Stir}{Stir}
\DeclareMathOperator{\Char}{Char}
\DeclareMathOperator{\GL}{GL}
\DeclareMathOperator{\IrrChar}{IrrChar}
\DeclareMathOperator{\Sym}{Sym}
\DeclareMathOperator{\Res}{Res}
\DeclareMathOperator{\ch}{char}
\DeclareRobustCommand{\qbinom}{\genfrac[]{0pt}{}}
\begin{document}

\title{Diagonal Supersymmetry for Coinvariant Rings}

\author[J. Lentfer]{John Lentfer}
\address{Department of Mathematics\\
         University of California, Berkeley, CA, USA}
\email{jlentfer@berkeley.edu}


\begin{abstract}
For finite groups $G$, we show that bosonic-fermionic coinvariant rings have a natural $U(\mathfrak{gl}(k|j)) \otimes \mathbb{C}[G]$-module structure.
In particular, we show that their character series are sums of super Schur functions $s_\lambda(\mathbf{q}/\mathbf{u})$ times irreducible characters of $G$ with universal coefficients, which do not depend on $k,j$. 
In the case where $G$ is the symmetric group with diagonal action, this proves the ``Diagonal Supersymmetry'' conjecture of F. Bergeron (2020).
\end{abstract}

\maketitle

\section{Introduction}\label{sec:introduction}

The diagonal coinvariant ring $R_n^{(2,0)} = \C[\bm{x},\bm{y}]/\allowbreak \langle \C[\bm{x},\bm{y}]_+^{\mathfrak{S}_n} \rangle$ was introduced by Haiman in 1994 \cite{Haiman1994}, and has played a central role in contemporary algebraic combinatorics (see for example \cite{CarlssonMellit2018,HaglundRemmelWilson2018,Mellit,DAdderioMellit,BHMPS-Delta,BHMPS-Paths}).
Here $\bm{x}$ and $\bm{y}$ denote two sets of variables $\{x_1,\ldots,x_n\}$ and $\{y_1,\ldots,y_n\}$, respectively. The defining ideal $\langle \C[\bm{x},\bm{y}]_+^{\mathfrak{S}_n} \rangle$ is generated by all polynomials in $\C[\bm{x},\bm{y}]$ without constant term, which are invariant under the diagonal action of the symmetric group $\mathfrak{S}_n$. Haiman conjectured \cite{Haiman1994} and then later proved \cite{Haiman2002} formulas for the dimension, bigraded Hilbert series, and Frobenius series of $R_n^{(2,0)}$ using novel results on the Hilbert scheme of points in $\C^2$. 

Work by F. Bergeron on coinvariant rings $R_n^{(k,0)}$ with $k$ sets of commuting variables \cite{Bergeron2013, BergeronOPAC} revealed that, as the number of sets of variables grows, their structure stabilizes. 
As $k$ grows large enough, their decomposition as $\GL(k) \times \mathfrak{S}_n$-modules is eventually governed by coefficients that do not depend on $k$ \cite{Bergeron2013}. 
Nonetheless, working out the explicit module structure, or even the dimension, remains difficult when $k \geq 3$, and a proof of either for $R_n^{(3,0)}$ has been elusive \cite{Haiman1994, BergeronPrevilleRatelle}.

In a second line of work, anticommuting variables were brought into the coinvariant ring story. 
The coinvariant ring $R_n^{(1,1)}$, with one set of commuting variables and one set of anticommuting variables, was initially studied by N. Bergeron, Colmenarejo, Li, Machacek, Sulzgruber, and Zabrocki in a working seminar in algebraic combinatorics at the Fields Institute in 2018. 
Subsequently, the multiplicity of its sign character \cite{SwansonWallach1}, its dimension and Hilbert series \cite{RhoadesWilson2023}, a monomial basis \cite{SaganSwanson2024, Angarone2024}, and its Frobenius series \cite{MuraiRhoadesWilson} were all determined. 
Zabrocki introduced the coinvariant ring $R_n^{(2,1)}$, with two sets of commuting variables and one set of anticommuting variables \cite{Zabrocki2019}, conjecturally related to a symmetric function appearing in the Delta conjecture of Haglund, Remmel, and Wilson \cite{HaglundRemmelWilson2018}.\footnote{One version of the Delta conjecture has been proven by \cite{DAdderioMellit} and \cite{BHMPS-Delta}, but the connection between that and Zabrocki's coinvariant ring $R_n^{(2,1)}$ remains conjectural.} 

These are all instances of coinvariant rings $R_n^{(k,j)}$ with $k$ sets of $n$ commuting (bosonic, even) variables $\bm{x}^{(1)}, \bm{x}^{(2)}, \ldots, \bm{x}^{(k)}$, where $\bm{x}^{(i)} = \{x_1^{(i)},\ldots,x_n^{(i)}\}$, and $j$ sets of $n$ anticommuting (fermionic, odd) variables $\bm{\theta}^{(1)}, \bm{\theta}^{(2)}, \ldots, \bm{\theta}^{(j)}$, where $\bm{\theta}^{(i)} = \{\theta_1^{(i)},\ldots,\theta_n^{(i)}\}$, for $k,j$ nonnegative integers (see equation~\eqref{eq:coinv} for the complete definition). 
F. Bergeron and others have proposed to study coinvariant rings in this general bosonic-fermionic setting (see \cite{Bergeron2020, BHIR, Zabrocki2020}). 
In particular, these two lines of work are connected by F. Bergeron's ``Diagonal Supersymmetry'' conjecture \cite{BIRS, Bergeron2020}, which predicts that the purely bosonic case already determines the full bosonic-fermionic case. 
We prove this conjecture in Corollary~\ref{cor:main-theorem}.

We first describe the construction of the polynomial ring $\C[\bm{x}^{(1)}, \ldots, \bm{x}^{(k)}, \bm{\theta}^{(1)}, \ldots, \bm{\theta}^{(j)}]$.
Let $G \subset \GL(V) = \GL(n)$ be a finite group acting on $V = \C^n$. 
Let $\C^{k|j}$ denote the complex superspace of even dimension $k$ and odd dimension $j$. 
Then $\Sym(\C^{k|j} \otimes V) = (\Sym V)^{\otimes k} \otimes (\bigwedge V)^{\otimes j}$, and upon choosing a basis of $V$, we label the variables so that they correspond to the tensor factors. 
That is, $\bm{x}^{(1)},\ldots, \bm{x}^{(k)}$ correspond to variables for the $k$ copies of the symmetric algebra $\Sym V$ and $\bm{\theta}^{(1)}, \ldots, \bm{\theta}^{(j)}$ correspond to variables for the $j$ copies of the exterior algebra $\bigwedge V$.
Thus we write $\Sym(\C^{k|j} \otimes V) = \C[\bm{x}^{(1)}, \ldots, \bm{x}^{(k)}, \bm{\theta}^{(1)}, \ldots, \bm{\theta}^{(j)}]$, where 
commuting variables commute with all variables;
anticommuting variables anticommute with all anticommuting variables. That is, $\theta_i^{(\ell)} \theta_h^{(m)} = - \theta_h^{(m)} \theta_i^{(\ell)}$, which implies that $(\theta_i^{(\ell)})^2 = 0$. 

We now define the \textit{$(k,j)$-bosonic-fermionic coinvariant ring for a finite group $G \subset \GL(n)$} by
\begin{equation}\label{eq:coinv} R_G^{(k,j)} = \C[\bm{x}^{(1)}, \ldots, \bm{x}^{(k)}, \bm{\theta}^{(1)}, \ldots, \bm{\theta}^{(j)}]/\langle \C[\bm{x}^{(1)}, \ldots, \bm{x}^{(k)}, \bm{\theta}^{(1)}, \ldots, \bm{\theta}^{(j)}]_+^{G} \rangle,\end{equation}
where $\C[\bm{x}^{(1)}, \ldots, \bm{x}^{(k)}, \bm{\theta}^{(1)}, \ldots, \bm{\theta}^{(j)}]^G_+$ denotes the polynomials without constant term, which are invariant under the diagonal action of $G$ on each set of variables.
In the case where $G$ is the symmetric group $\mathfrak{S}_n$ acting diagonally by permuting the indices of the variables within each set, we denote this by $R_n^{(k,j)}$.

In this paper, we propose that a natural setting for studying $R_G^{(k,j)}$ is as a $U(\mathfrak{gl}(k|j)) \otimes \C[G]$-module, where $U(\mathfrak{gl}(k|j))$ is the universal enveloping algebra of the Lie superalgebra $\mathfrak{gl}(k|j)$. 
Recall that a $G$-representation $G \to \GL(V)$ is equivalent to a $\C[G]$-module \cite[\S21.1]{BourbakiAlgebraIIX}.
In general, given any Lie superalgebra $\mathfrak{g}$, a $\mathfrak{g}$-module is equivalent to a $U(\mathfrak{g})$-module, where $U(\mathfrak{g})$ is the universal enveloping algebra of $\mathfrak{g}$ \cite[Chapter 1.5.1]{ChengWang}.
We will frequently utilize these equivalences.

We describe the relevant actions on $\Sym(\C^{k|j} \otimes V)$.
First, $\mathfrak{gl}(k|j)$ acts on $\C^{k|j}$ as the defining representation. 
Since $\mathfrak{gl}(k|j)$ acts trivially on $V=\C^n$, then $\mathfrak{gl}(k|j)$ acts on $\C^{k|j} \otimes V$.
As $\C^{k|j} \otimes V$ is the degree 1 (in total degree) part of $\Sym(\C^{k|j} \otimes V)$, the action of $\mathfrak{gl}(k|j)$ extends to the whole ring $\Sym(\C^{k|j} \otimes V)$ by left superderivations (see equation~\eqref{eq:superderivations}).
Next, since $G \subset \GL(V)$, then $G$ acts in a natural way on $V$, and trivially on $\C^{k|j}$. 
Combining these, $G$ acts on $\C^{k|j} \otimes V$, which extends to $\Sym(\C^{k|j} \otimes V)$.
The actions of $\mathfrak{gl}(k|j)$ and $G$ commute with each other (cf. \cite[Section 5.2.1]{ChengWangBook}).

With this action in hand, we then show that $R_G^{(k,j)}$ is a quotient module of $\Sym(\C^{k|j} \otimes V)$.
Since $\mathfrak{gl}(k|j)$ is not semisimple, we are not a priori guaranteed that $R_G^{(k,j)}$ will decompose into simple $U(\mathfrak{gl}(k|j))$-modules. 
Our first main result is that the desired $U(\mathfrak{gl}(k|j)) \otimes \C[G]$-module structure exists for $R_G^{(k,j)}$, and we describe how it decomposes into simple modules.
Denote the set of partitions $\lambda$ with length $\ell(\lambda) \leq n$ which satisfy $\lambda_{k+1} \leq j$ by $P(k,j,n)$.
By $P(\infty,j,n)$, $P(k,\infty,n)$, or $P(\infty,\infty,n)$, we mean the set of partitions $\lambda$ with length $\ell(\lambda) \leq n$.
Let $\mu$ index the irreducible characters of $G$.
Let $s_\lambda(\mathbf{q}/\mathbf{u})$ denote a super Schur function (see Section~\ref{sec:background} for definitions), where $\mathbf{q}$ denotes $(q_1,\ldots,q_k)$, and $\mathbf{u}$ denotes $(u_1,\ldots,u_j)$. 
\begin{theorem}\label{thm:new-main-first-part}
Fix a positive integer $n$ and a finite group $G \subset \GL(n)$. 
There is an isomorphism of $U(\mathfrak{gl}(k|j)) \otimes \C[G]$-modules:
    \begin{equation}
        R_G^{(k,j)} \cong \bigoplus_{\lambda \in P(k,j,n)} \bigoplus_{\mu} \left(U^\lambda_{k|j} \otimes N^\mu\right)^{\oplus c_{\lambda\mu}},
    \end{equation}
    where the $U^\lambda_{k|j}$ are simple $U(\mathfrak{gl}(k|j))$-modules with character $s_\lambda(\mathbf{q}/\mathbf{u})$ and the $N^\mu$ are simple $\C[G]$-modules, for some nonnegative integer coefficients $c_{\lambda\mu}$.
\end{theorem}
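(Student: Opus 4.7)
The plan is to invoke super Howe duality for the pair $(\mathfrak{gl}(k|j), \mathfrak{gl}(n))$ acting on $\Sym(\C^{k|j}\otimes V)$, restrict the $\mathfrak{gl}(n)$-factor to the subgroup $G \subset \GL(n)$, show that the invariant ideal is a submodule for both factors, and finally read off the decomposition of the quotient using semisimplicity.

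\emph{Step 1 (Super Howe duality).} Sergeev's super Howe duality (as presented, for example, in Cheng--Wang, \emph{Dualities and Representations of Lie Superalgebras}, Chapter~5) provides an isomorphism
\begin{equation*}
    \Sym(\C^{k|j}\otimes V) \;\cong\; \bigoplus_{\lambda \in P(k,j,n)} U^\lambda_{k|j} \otimes V^\lambda_n
\end{equation*}
of $U(\mathfrak{gl}(k|j)) \otimes U(\mathfrak{gl}(n))$-modules, where $V^\lambda_n$ is the simple $\mathfrak{gl}(n)$-module of highest weight $\lambda$ and $U^\lambda_{k|j}$ is simple with character $s_\lambda(\mathbf{q}/\mathbf{u})$. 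Restricting the $\GL(n)$-action to $G$ and decomposing $V^\lambda_n|_G \cong \bigoplus_{\mu} (N^\mu)^{\oplus d_{\lambda\mu}}$ yields
\begin{equation*}
    \Sym(\C^{k|j}\otimes V) \;\cong\; \bigoplus_{\lambda,\mu} (U^\lambda_{k|j} \otimes N^\mu)^{\oplus d_{\lambda\mu}}
\end{equation*}
as a $U(\mathfrak{gl}(k|j)) \otimes \C[G]$-module. Each summand $U^\lambda_{k|j}\otimes N^\mu$ is simple: any submodule is $N^\mu$-isotypic by simplicity of $N^\mu$, hence of the form $W\otimes N^\mu$ with $W$ a $\mathfrak{gl}(k|j)$-submodule of the simple module $U^\lambda_{k|j}$.

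\emph{Step 2 (The invariant ideal is a submodule).} Let $I := \langle \C[\bm{x},\bm{\theta}]_+^G\rangle$. By construction $I$ is $\C[G]$-stable. For $\mathfrak{gl}(k|j)$-stability, observe that the action by super-derivations preserves total polynomial degree (so it maps $\C[\bm{x},\bm{\theta}]_+$ into itself), and since it commutes with $G$ it maps $\C[\bm{x},\bm{\theta}]_+^G$ into itself; the super Leibniz rule then forces $U(\mathfrak{gl}(k|j))\cdot I \subseteq I$.

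\emph{Step 3 (Descent to the quotient).} The decomposition in Step~1 shows that $\Sym(\C^{k|j}\otimes V)$ is semisimple as a $U(\mathfrak{gl}(k|j)) \otimes \C[G]$-module, so the submodule $I$ has a complementary submodule isomorphic to $R_G^{(k,j)}$, which necessarily decomposes as a direct sum of the simples $U^\lambda_{k|j}\otimes N^\mu$ with multiplicities $c_{\lambda\mu} := d_{\lambda\mu} - [I : U^\lambda_{k|j}\otimes N^\mu] \geq 0$. This gives exactly the claimed isomorphism.

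The hard part is Step~1: invoking the super Howe duality in the precise form needed and matching its output (the indexing set $P(k,j,n)$ and the character $s_\lambda(\mathbf{q}/\mathbf{u})$ of $U^\lambda_{k|j}$) with the conventions of the theorem statement. Steps~2 and~3 are short formal verifications once the duality decomposition is in hand.
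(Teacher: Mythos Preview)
Your proposal is correct and follows essentially the same route as the paper: super Howe duality for $\Sym(\C^{k|j}\otimes V)$, restriction of the $\GL(n)$-factor to $G$, verification that the invariant ideal is a $U(\mathfrak{gl}(k|j))\otimes\C[G]$-submodule via the super Leibniz rule, and then semisimplicity to read off the quotient. The only cosmetic difference is that the paper works degree-by-degree (so each $\Sym^d$ has finite length and Jordan--H\"older applies), whereas you invoke semisimplicity of the full infinite direct sum at once; both arguments are valid since each simple $U^\lambda_{k|j}\otimes N^\mu$ occurs with finite multiplicity.
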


Let $\IrrChar(G)$ denote the set of irreducible characters of $G$.
From the module structure in Theorem~\ref{thm:new-main-first-part}, taking $\mathfrak{gl}(k|j)$ and $G$-characters yields a character series in terms of super Schur functions and $G$-characters. 
A priori, for fixed $G \subset \GL(V)$, these coefficients are functions of $(k,j)$. However, in our second main result we show that they are independent of $(k,j)$.
\begin{theorem}\label{thm:G-main-theorem}
    Fix a positive integer $n$ and a finite group $G \subset \GL(n)$. 
    For partitions $\lambda$ with $\ell(\lambda) \leq n$, and indices $\mu$ of irreducible $G$-characters, there exist nonnegative integer coefficients $c_{\lambda\mu}$ such that for any $(k,j)$, the multigraded character series of $R_G^{(k,j)}$ is
\begin{equation}\label{eq:G-theorem}
    \Char(R_G^{(k,j)}; \mathbf{q};\mathbf{u}) =  \sum_{\lambda \in P(k,j,n)} \sum_{\chi^\mu \in \IrrChar(G)}c_{\lambda\mu} s_\lambda(\mathbf{q}/\mathbf{u})\chi^\mu.
\end{equation}
\end{theorem}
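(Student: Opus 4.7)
The plan is to deduce Theorem~\ref{thm:G-main-theorem} at the character level from the module isomorphism in Theorem~\ref{thm:new-main-first-part}. Because the $U(\mathfrak{gl}(k|j))$- and $\C[G]$-actions on $R_G^{(k,j)}$ commute, taking the joint $\mathfrak{gl}(k|j) \times G$-character of both sides of that decomposition is multiplicative on each tensor factor. Using $\Char(U^\lambda_{k|j})(\mathbf{q};\mathbf{u}) = s_\lambda(\mathbf{q}/\mathbf{u})$, by the definition of the super Schur function as the character of this simple highest-weight $U(\mathfrak{gl}(k|j))$-module, together with $\Char(N^\mu) = \chi^\mu$, one obtains equation~\eqref{eq:G-theorem} with the same integer coefficients $c_{\lambda\mu}$ as appear in Theorem~\ref{thm:new-main-first-part}.

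The substantive content of Theorem~\ref{thm:G-main-theorem} is thus the independence of $c_{\lambda\mu}$ from $(k,j)$, already built into Theorem~\ref{thm:new-main-first-part}. To verify this I would describe the coefficients intrinsically. Commutativity of the two actions identifies $c_{\lambda\mu}$ with the multiplicity of $N^\mu$ in the $G$-module $H^{(k,j)}_\lambda := \operatorname{Hom}_{U(\mathfrak{gl}(k|j))}(U^\lambda_{k|j}, R_G^{(k,j)})$, which may be realized concretely as the subspace of $\mathfrak{gl}(k|j)$-highest-weight vectors of weight $\lambda$ inside $R_G^{(k,j)}$.

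To complete the argument it suffices to show that $H^{(k,j)}_\lambda$ is canonically isomorphic as a $\C[G]$-module to a fixed ``universal'' $G$-module depending only on $\lambda$, $n$, and $G$. I would obtain this via super Howe duality: the $(\mathfrak{gl}(k|j), \mathfrak{gl}(n))$-duality on $\Sym(\C^{k|j} \otimes V)$ identifies the $\mathfrak{gl}(k|j)$-highest-weight-$\lambda$ subspace of the polynomial ring with a copy of the $\GL(V)$-irreducible $W^\lambda$, which is independent of $(k,j)$ and inherits its $G$-action by restriction from $\GL(V)$. The main obstacle is then showing that passing to the coinvariant quotient cuts out a $G$-submodule of $W^\lambda$ that is also $(k,j)$-independent; concretely, this amounts to verifying that the intersection of the $G$-invariant ideal $\langle \Sym(\C^{k|j}\otimes V)^G_+\rangle$ with the $\lambda$-highest-weight subspace is described universally in $(k,j)$, and this is essentially the technical core of Theorem~\ref{thm:new-main-first-part}.
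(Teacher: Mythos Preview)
Your first paragraph is correct and matches the paper. However, your claim that the $(k,j)$-independence of $c_{\lambda\mu}$ is ``already built into Theorem~\ref{thm:new-main-first-part}'' is wrong: that theorem, as stated and proved, gives a semisimple decomposition of $R_G^{(k,j)}$ for each fixed $(k,j)$ separately, with no comparison across different $(k,j)$. Establishing that independence is precisely the additional content of Theorem~\ref{thm:G-main-theorem}. Your highest-weight-space sketch correctly identifies the obstacle---showing that the ideal $\langle \Sym(\C^{k|j}\otimes V)^G_+\rangle$ intersects the $\lambda$-highest-weight subspace in a $(k,j)$-independent way---but then defers it back to ``the technical core of Theorem~\ref{thm:new-main-first-part},'' which is circular since that proof only establishes semisimplicity for a fixed $(k,j)$ via Lemma~\ref{lem:composition-series} and never compares different $(k,j)$. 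You have also not said how to identify the highest-weight-$\lambda$ subspaces across different $(k,j)$, since they sit in genuinely different polynomial rings.

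The paper's actual proof avoids this by working directly with characters: it checks that setting $\bm{x}^{(k)}=0$ in $R_G^{(k,j)}$ yields $R_G^{(k-1,j)}$ (by verifying the defining ideal behaves correctly under this restriction, equation~\eqref{eq:claim}), so that $\Char(R_G^{(k-1,j)}) = \Char(R_G^{(k,j)})|_{q_k=0}$, and then invokes the restriction property $s_\lambda(q_1,\ldots,q_k/\mathbf{u})|_{q_k=0} = s_\lambda(q_1,\ldots,q_{k-1}/\mathbf{u})$ of super Schur functions (equation~\eqref{eq:restriction1}) to match coefficients. The analogous argument in the $\mathbf{u}$-variables finishes the induction.
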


When $G$ is the symmetric group acting by permuting variables, we may apply the Frobenius characteristic map to the symmetric group characters, which are indexed by partitions of $n$. 
Let $\mathbf{z}$ be an (infinite) auxiliary set of variables for the Frobenius characters.
Then the following result is an immediate consequence of Theorem~\ref{thm:G-main-theorem}. Thus we prove a conjecture of F. Bergeron.

\begin{corollary}[{Diagonal Supersymmetry \cite[Conjecture 1]{Bergeron2020}}]\label{cor:main-theorem}
    Fix a positive integer $n$. 
    For partitions $\lambda$ with $\ell(\lambda) \leq n$, and $\mu \vdash n$, there exist nonnegative integer coefficients $c_{\lambda\mu}$ such that for any $(k,j)$, the multigraded Frobenius series of $R_n^{(k,j)}$ is
\begin{equation}\label{eq:theorem}
    \Frob(R_n^{(k,j)}; \mathbf{q};\mathbf{u}) = \sum_{\lambda \in P(k,j,n)} \sum_{\mu \vdash n} c_{\lambda\mu} s_\lambda(\mathbf{q}/\mathbf{u})s_\mu(\mathbf{z}).
\end{equation}
\end{corollary}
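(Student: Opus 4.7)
The plan is to derive Corollary~\ref{cor:main-theorem} as a direct specialization of Theorem~\ref{thm:G-main-theorem}. First I would take $G = \mathfrak{S}_n$, embedded into $\GL(n)$ as permutation matrices acting on $V = \C^n$ by permuting a chosen basis. This embedding induces precisely the diagonal action of $\mathfrak{S}_n$ on each set of commuting variables $\bm{x}^{(i)}$ and on each set of anticommuting variables $\bm{\theta}^{(i)}$, so that the coinvariant ring $R_{\mathfrak{S}_n}^{(k,j)}$ as defined in \eqref{eq:coinv} coincides with $R_n^{(k,j)}$. In particular Theorem~\ref{thm:G-main-theorem} applies and yields nonnegative integer coefficients $c_{\lambda\mu}$, independent of $(k,j)$, such that
\begin{equation*}
\Char(R_n^{(k,j)};\mathbf{q};\mathbf{u}) = \sum_{\lambda \in P(k,j,n)} \sum_{\chi^\mu \in \IrrChar(\mathfrak{S}_n)} c_{\lambda\mu}\, s_\lambda(\mathbf{q}/\mathbf{u})\, \chi^\mu.
\end{equation*}

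Next I would recall that the irreducible characters of $\mathfrak{S}_n$ are naturally indexed by the partitions $\mu \vdash n$, and that the Frobenius characteristic map is a $\C$-linear isomorphism sending the irreducible character $\chi^\mu$ to the Schur function $s_\mu(\mathbf{z})$. The multigraded Frobenius series $\Frob(R_n^{(k,j)};\mathbf{q};\mathbf{u})$ is by definition obtained from the $\mathfrak{S}_n$-character series $\Char(R_n^{(k,j)};\mathbf{q};\mathbf{u})$ by applying the Frobenius map coefficient-wise in each multidegree (with the super Schur functions $s_\lambda(\mathbf{q}/\mathbf{u})$ treated as scalars in the variables $\mathbf{q},\mathbf{u}$). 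Applying this map term by term to the identity above replaces each $\chi^\mu$ by $s_\mu(\mathbf{z})$ and yields exactly \eqref{eq:theorem}, with the same coefficients $c_{\lambda\mu}$, which therefore are nonnegative integers independent of $(k,j)$.

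There is really no obstacle at this stage: once Theorem~\ref{thm:G-main-theorem} is in hand, the corollary is a formal consequence of the Frobenius correspondence together with the identification of the two coinvariant rings. All of the substantive content, namely the existence of a $U(\mathfrak{gl}(k|j)) \otimes \C[\mathfrak{S}_n]$-module structure on the coinvariant ring, its decomposition into simples, and the $(k,j)$-independence of the multiplicities, has been discharged by Theorems~\ref{thm:new-main-first-part} and~\ref{thm:G-main-theorem}.
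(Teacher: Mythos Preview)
Your proposal is correct and matches the paper's approach exactly: the paper states that Corollary~\ref{cor:main-theorem} is an immediate consequence of Theorem~\ref{thm:G-main-theorem} obtained by taking $G = \mathfrak{S}_n$ and applying the Frobenius characteristic map, which is precisely what you do.
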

Equation~\eqref{eq:theorem} says that the mixed bosonic-fermionic Frobenius series is obtained from the purely bosonic Frobenius series by replacing each $s_\lambda(\mathbf{q})$ with $s_\lambda(\mathbf{q}/\mathbf{u})$; this is the sense in which the bosonic case determines the mixed case. 
We make this determination explicit in Corollaries~\ref{cor:diagonal-supersymmetry-threshold} and~\ref{cor:infinite-threshold}.

\begin{remark}\label{rmk:too-small}
Given fixed $n,k,j$, it is important to note that equation~\eqref{eq:theorem} does not determine $c_{\lambda \mu}$ for all partitions $\lambda$ of length at most $n$, but only for those which satisfy $\lambda_{k+1} \leq j$. 
\end{remark}

Previously, F. Bergeron showed \cite[equation (2.1)]{Bergeron2020} that $R_n^{(k,j)}$ is a $\GL(k) \times \GL(j) \times \mathfrak{S}_n$-module.
The present approach strengthens this by exploiting the Lie superalgebra structure that leverages the relationship between the even and odd parts.
In this case, the irreducible characters of polynomial representations of $\GL(k)$ are Schur functions $s_\lambda$ where $\ell(\lambda) \leq k$ (see for example \cite[I.A.8]{Macdonald}). 
Taking $\GL(k) \times \GL(j)$ characters and Frobenius characters, one obtains a series which is a product of Schur functions in three sets of variables. That is, for partitions $\lambda, \nu$, and for $\mu \vdash n$, there exist nonnegative integers $c_{\lambda\nu\mu}$ such that for any $(k,j)$,
\begin{equation}\label{eq:a-priori}
    \Frob(R_n^{(k,j)}; \mathbf{q};\mathbf{u}) = \sum_{\substack{\lambda,\nu,\\ \ell(\lambda) \leq k, \ell(\nu) \leq j}}\sum_{\mu \vdash n}  c_{\lambda\nu\mu} s_\lambda(\mathbf{q}) s_\nu(\mathbf{u})s_\mu(\mathbf{z}).
\end{equation}  
Since a super Schur function $s_\lambda(\mathbf{q}/\mathbf{u})$ can always be written as a sum of products of Schur functions $s_\rho(\mathbf{q})s_\nu(\mathbf{u})$, but the converse does not hold, equation~\eqref{eq:theorem} implies equation~\eqref{eq:a-priori}, but equation~\eqref{eq:a-priori} does not imply equation~\eqref{eq:theorem}.

Define $R_n^{(\infty,\infty)}$ as the direct limit of the modules $R_n^{(k,j)}$ over the directed set $\N \times \N$ ordered componentwise, with transition maps $R_n^{(k,j)}\to R_n^{(k',j')}$ for $(k,j)\leq(k',j')$ induced by the inclusions of polynomial rings $\C[\bm{x}^{(1)},\dots,\bm{x}^{(k)},\bm{\theta}^{(1)},\dots,\bm{\theta}^{(j)}]\hookrightarrow\C[\bm{x}^{(1)},\dots,\bm{x}^{(k')},\bm{\theta}^{(1)},\dots,\bm{\theta}^{(j')}]$ that send each variable to itself.
These descend to graded homomorphisms on the quotients because any $G$-invariant in the smaller ring remains $G$-invariant in the larger one. 
They are moreover injective by Lemma~\ref{lem:intersection}.

The limit $R_n^{(\infty,\infty)}$ has an $\N^\infty \times \N^\infty$ multigrading, where $\N^\infty$ denotes the set of nonnegative integer sequences with finitely many nonzero terms.
Let $\Lambda(\mathbf{z})$ denote the ring of symmetric functions in the variables $\mathbf{z}$.
For any $\N^\infty \times \N^\infty$ multigraded $\mathfrak{S}_n$-module $W$ with finite-dimensional homogeneous components, one can define a multigraded Frobenius series
\begin{equation}
    \Frob(W; \mathbf{q}; \mathbf{u}) = \sum_{(\alpha, \beta) \in \N^\infty \times \N^\infty} \mathbf{q}^\alpha \mathbf{u}^\beta \Frob(W_{(\alpha, \beta)}) \in \Z[[\mathbf{q},\mathbf{u}]] \otimes \Lambda(\mathbf{z}).
\end{equation}
Here $\mathbf{q} = (q_1,q_2,\ldots)$ and $\mathbf{u} = (u_1,u_2,\ldots)$ are infinite formal alphabets, and $\mathbf{q}^\alpha \mathbf{u}^\beta$ records the multidegree of the component $W_{(\alpha, \beta)}$.
This formula recovers the standard definition of the multigraded Frobenius series in the finite case.

If $W = \varinjlim W^{(k,j)}$ for some finite-dimensional, $\N^{k} \times \N^{j}$ multigraded modules $\{W^{(k,j)}\}$ with graded transition maps, then
\begin{equation}
    \Frob(W ;\mathbf{q};\mathbf{u}) = \varprojlim_{k,j} \Frob(W^{(k,j)} ; q_1,\ldots,q_{k};u_1,\ldots,u_{j}),
\end{equation}
where the inverse limit is taken with respect to the projection maps obtained by setting $q_{k+1},q_{k+2},\dots$ and $u_{j+1},u_{j+2},\dots$ to zero.
In particular, for $R_n^{(k,j)}$, the index set $P(k,j,n)$ stabilizes once $k  \geq n$ to the set of all partitions $\lambda$ such that $\ell(\lambda) \leq n$.

F. Bergeron showed \cite{Bergeron2020} that there is a well-defined universal series
\begin{equation}\label{eq:francois-universal-series}
    \Frob(R_n^{(\infty,\infty)}; \mathbf{q};\mathbf{u}) =  \sum_{\lambda,\nu} \sum_{\mu \vdash n}c_{\lambda\nu\mu} s_\lambda(\mathbf{q})s_\nu(\mathbf{u})s_\mu(\mathbf{z}),
\end{equation}
which determines all $c_{\lambda\nu\mu}$ which could appear in the expansion in equation~\eqref{eq:a-priori} for any $(k,j)$.

We strengthen equation~\eqref{eq:francois-universal-series} by showing that there is a well-defined universal series in terms of super Schur functions. 
\begin{proposition}\label{prop:universal-series}
Fix a positive integer $n$.
    There is a well-defined \textit{universal series}
\begin{equation}\label{eq:universal-series}
    \Frob(R_n^{(\infty,\infty)}; \mathbf{q};\mathbf{u}) =  \sum_{\lambda,\ \ell(\lambda) \leq n} \sum_{\mu \vdash n}c_{\lambda\mu} s_\lambda(\mathbf{q}/\mathbf{u})s_\mu(\mathbf{z}),
\end{equation}
which determines all $c_{\lambda\mu}$ which could appear in the expansion in equation~\eqref{eq:theorem} for any $(k,j)$.
\end{proposition}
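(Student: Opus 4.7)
The plan is to derive the proposition directly from Corollary~\ref{cor:main-theorem} by packaging the universal coefficients $c_{\lambda\mu}$ (which by that corollary do not depend on $(k,j)$) into one formal series indexed by $\lambda \in P(\infty,\infty,n) = \{\lambda : \ell(\lambda) \leq n\}$, and then checking that finite specializations of $(\mathbf{q},\mathbf{u})$ recover the finite-case formula. The only additional input beyond Corollary~\ref{cor:main-theorem} is the vanishing of super Schur functions on ``too small'' alphabets.

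First I would define the candidate universal series
\begin{equation*}
F(\mathbf{q};\mathbf{u};\mathbf{z}) := \sum_{\lambda \in P(\infty,\infty,n)} \sum_{\mu \vdash n} c_{\lambda\mu}\, s_\lambda(\mathbf{q}/\mathbf{u})\, s_\mu(\mathbf{z}),
\end{equation*}
with $\mathbf{q},\mathbf{u},\mathbf{z}$ now taken to be infinite alphabets and the $c_{\lambda\mu}$ the universal integers from Corollary~\ref{cor:main-theorem}. Since $s_\lambda(\mathbf{q}/\mathbf{u})$ is homogeneous of total degree $|\lambda|$ in the combined $(\mathbf{q},\mathbf{u})$ grading, and only finitely many $\lambda$ with $\ell(\lambda)\leq n$ have a given size, the coefficient of any fixed monomial in $F$ is a finite $\Z$-linear combination. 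This shows that $F$ is well-defined as a formal power series in the three infinite alphabets.

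Next I would verify that the specialization $\mathbf{q} \mapsto (q_1,\ldots,q_k)$, $\mathbf{u} \mapsto (u_1,\ldots,u_j)$ recovers $\Frob(R_n^{(k,j)}; \mathbf{q}; \mathbf{u})$. The essential ingredient is the standard fact that $s_\lambda(q_1,\ldots,q_k/u_1,\ldots,u_j) = 0$ unless $\lambda \in P(k,j,n)$, which follows from the description of super Schur functions in terms of $(k,j)$-supertableaux of hook shape: when $\lambda_{k+1}>j$ no such tableau exists. Under the specialization $F$ collapses to the sum over $\lambda \in P(k,j,n)$, which by Corollary~\ref{cor:main-theorem} equals $\Frob(R_n^{(k,j)}; \mathbf{q};\mathbf{u})$. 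One then adopts $F$ as the definition of $\Frob(R_n^{(\infty,\infty)}; \mathbf{q};\mathbf{u})$.

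The universality claim---that $F$ determines every $c_{\lambda\mu}$ that could appear in the expansion~\eqref{eq:theorem} for any $(k,j)$---follows because $P(\infty,\infty,n) = \bigcup_{k,j} P(k,j,n)$, so every admissible $\lambda$ is represented in $F$, resolving the ambiguity highlighted in Remark~\ref{rmk:too-small}. I do not anticipate a substantial obstacle: the super Schur vanishing is standard, and the universality of the $c_{\lambda\mu}$ is already built into Corollary~\ref{cor:main-theorem}. The only real care required is to set up formal power series in infinite alphabets correctly, so that the coefficient-by-coefficient specialization argument is unambiguous; the rest is essentially bookkeeping.
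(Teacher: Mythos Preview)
Your proposal is correct and follows essentially the same approach as the paper: both arguments use Corollary~\ref{cor:main-theorem} to obtain universal coefficients $c_{\lambda\mu}$, then invoke the compatibility of super Schur functions under shrinking the alphabets (the paper phrases this via the restriction identities~\eqref{eq:restriction1}--\eqref{eq:restriction2}, you via the equivalent vanishing $s_\lambda(\mathbf{q}/\mathbf{u})=0$ when $\lambda\notin P(k,j,n)$) to conclude that the infinite-alphabet series specializes correctly. Your write-up is somewhat more explicit about well-definedness of the formal series, but the mathematical content is the same.
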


In general, $c_{\lambda\mu}$ may be zero or nonzero.
For some finite $k$ or $j$ in the expansion of $\Frob(R_n^{(k,j)}; \mathbf{q};\mathbf{u})$ (equation~\eqref{eq:theorem}), not all nonzero $c_{\lambda\mu}$ from the universal series (equation~\eqref{eq:universal-series}) need to appear, since they are multiplied by $s_\lambda(\mathbf{q} / \mathbf{u})$, which equals $0$ when $k$ or $j$ is too small.

This raises the question of when there are enough nonzero coefficients $c_{\lambda\mu}$ determined by the multigraded Frobenius series of $R_n^{(k,j)}$ to determine the multigraded Frobenius series of $R_n^{(k',j')}$. 
By Proposition~\ref{prop:universal-series}, the multigraded Frobenius series of $R_n^{(k,j)}$ determines the multigraded Frobenius series of $R_n^{(k',j')}$ whenever $k' \leq k$ and $j' \leq j$. 
In Proposition~\ref{prop:negative}, we show that for sufficiently large $n$, neither of the graded Frobenius series of $R_n^{(1,0)}$ and $R_n^{(0,1)}$ determines enough nonzero $c_{\lambda\mu}$ to determine the other; similarly none of the bigraded Frobenius series of $R_n^{(2,0)}$, $R_n^{(1,1)}$, and $R_n^{(0,2)}$ determine enough nonzero $c_{\lambda\mu}$ to determine another.

The outline of the paper is as follows.
After reviewing the background in Section~\ref{sec:background}, we prove Theorems \ref{thm:new-main-first-part} and \ref{thm:G-main-theorem} in Section~\ref{sec:bergeron-conjecture}.
In Section~\ref{sec:discussion}, we study the universal series for the symmetric group, where we prove Propositions~\ref{prop:universal-series} and~\ref{prop:negative}. 
In Section~\ref{sec:hilbert}, we conclude by discussing further combinatorial consequences of cancellation on the Hilbert and Frobenius series of certain $R_n^{(k,j)}$. 

\section{Background}\label{sec:background}

We assume basic familiarity with symmetric functions (see \cite[Chapter 7]{StanleyEC2} or \cite[Chapter I]{Macdonald}), and representation theory of finite groups and Lie algebras (see \cite{BourbakiAlgebraIIX}).
Assume all tensor products are taken over $\C$.

For a nonnegative integer $d$, the $q$-integer is $[d]_q = 1 + q + q^2 + \cdots +q^{d-1}$ and the $q$-factorial is $[d]_q! = [d]_q[d-1]_q \cdots [2]_q[1]_q$, where we take $[0]_q! = 1$.
For integers $m \geq d \geq 0$, the $q$-binomial coefficient is
\begin{equation}
    \qbinom{m}{d}_q = \frac{[m]_q!}{[d]_q!\,[m-d]_q!},
\end{equation}
which is also the generating function $\sum_\lambda q^{|\lambda|}$ over all partitions $\lambda$ inside the rectangular shape $(m-d)^d$.
By convention, set $\qbinom{m}{0}_q=1$ for all integers $m$ and set $\qbinom{m}{d}_q = 0$ if $d < 0$ or $d > m \geq 0$.

We write $[m]_{u,v} = u^{m-1} + u^{m-2}v + \cdots + uv^{m-2} + v^{m-1} = s_{(m-1)}(u,v)$ for a two-variable deformation of $m$.

If a ring $R$ decomposes as a direct sum of multihomogeneous $\C[G]$-modules, for $G$ a finite group:
\begin{equation}\label{eq:R-grading}
R = \bigoplus_{r_1,\ldots,r_k, s_1, \ldots, s_j \geq 0} (R)_{r_1,\ldots,r_k, s_1, \ldots, s_j},
\end{equation}
then its \textit{multigraded character series} is
\begin{equation} 
\Char(R; \mathbf{q}; \mathbf{u}) := \sum_{r_1,\ldots,r_k, s_1, \ldots, s_j \geq 0} \ch\left((R)_{r_1,\ldots,r_k, s_1, \ldots, s_j} \right)q_1^{r_1} \cdots q_k^{r_k}u_1^{s_1} \cdots u_j^{s_j},
\end{equation}
where $\ch(S)$ denotes the character of a $G$-module $S$.

Recall that the Frobenius characteristic map $F$ is an isomorphism between the group of virtual $\mathfrak{S}_n$-characters and the degree-$n$ symmetric functions $\Lambda_\Z^n$, given by $F(\chi^\mu) = s_\mu(\mathbf{z})$, where $\chi^\mu$ is the irreducible $\mathfrak{S}_n$-character indexed by $\mu$, and $s_\mu(\mathbf{z})$ is a Schur function (see \cite[Chapter 7.18]{StanleyEC2}). 
In the special case where $G$ is the symmetric group $\mathfrak{S}_n$, such as when $R = R_n^{(k,j)}$, we can apply the Frobenius characteristic map $F$ to the character of each multigraded component, and obtain the \textit{multigraded Frobenius series:} 
\begin{equation} \Frob(R_n^{(k,j)}; \mathbf{q}; \mathbf{u}) := \sum_{r_1,\ldots,r_k, s_1, \ldots, s_j \geq 0} F\ch\left((R_n^{(k,j)})_{r_1,\ldots,r_k, s_1, \ldots, s_j} \right)q_1^{r_1} \cdots q_k^{r_k}u_1^{s_1} \cdots u_j^{s_j}.\end{equation}
We will often use $q,t$ for $q_1,q_2$ and $u,v$ for $u_1,u_2$ when $k,j \leq 2$.

$\Sym(\C^{k|j} \otimes V)$ is multigraded with respect to $\mathbf{q}$ and $\mathbf{u}$, where each variable in $\bm{x}^{(i)}$ has $q_i$-degree of $1$ and each variable in $\bm{\theta}^{(i)}$ has $u_i$-degree of $1$.
Furthermore, $\Sym(\C^{k|j} \otimes V)$ is graded by total degree, where each variable has degree $1$.
That is,
\begin{equation}\label{eq:total-degree}
    \Sym(\C^{k|j} \otimes V) = \bigoplus_{d \geq 0}\Sym^d(\C^{k|j} \otimes V),
\end{equation}
where $\Sym^d(\C^{k|j} \otimes V)$ is the $d$th symmetric power.

Define a \textit{super Schur function\footnote{Also called a hook Schur function---not to be confused with a Schur function of a hook shaped partition.}} (see \cite{BereleRegev} or \cite[Appendix A.2.2]{ChengWangBook}) by 
\begin{equation}\label{eq:superschur}
    s_\lambda(\mathbf{q}/\mathbf{u}) = \sum_{\nu \subseteq \lambda} s_\nu(\mathbf{q}) s_{\lambda'/\nu'}(\mathbf{u}).
\end{equation}
Super Schur functions can also be defined in terms of super tableaux (see \cite[Chapter 12.3]{Musson}).
While we will not otherwise use it, we note that in \cite{Bergeron2020} the plethystic notation $s_\lambda [\mathbf{q} - \varepsilon \mathbf{u}]$ is used. Since (see for example \cite[Section 2]{Bergeron2020})
\begin{equation} s_\lambda [\mathbf{q} - \varepsilon \mathbf{u}] = \sum_{\nu \subseteq \lambda} s_\nu(\mathbf{q}) s_{\lambda'/\nu'}(\mathbf{u}),
\end{equation}
then $s_\lambda(\mathbf{q}/\mathbf{u}) = s_\lambda [\mathbf{q} - \varepsilon \mathbf{u}]$. 
We will exclusively use $s_\lambda(\mathbf{q}/\mathbf{u})$.

Super Schur functions satisfy the following key properties \cite[I.3 Example 23]{Macdonald}:
\begin{itemize}
    \item Cancellation:
    \begin{equation}\label{eq:cancellation}
        s_\lambda(q_1,\ldots,q_{k-1},q_k/u_1,\ldots,u_{j-1},u_j)|_{u_j = -q_k} = s_\lambda(q_1,\ldots,q_{k-1}/u_1,\ldots,u_{j-1}),
    \end{equation}
    \item Restriction:
    \begin{equation}\label{eq:restriction1}
        s_\lambda(q_1,\ldots,q_{k-1},q_k/u_1,\ldots,u_j)|_{q_k = 0} = s_\lambda(q_1,\ldots,q_{k-1}/u_1,\ldots,u_{j}),
    \end{equation}
    \begin{equation}\label{eq:restriction2}
        s_\lambda(q_1,\ldots,q_k/u_1,\ldots,u_{j-1},u_j)|_{u_j = 0} = s_\lambda(q_1,\ldots,q_k/u_1,\ldots,u_{j-1}).
    \end{equation}
\end{itemize}
Also, for fixed $(k,j)$ and each $m \geq 0$, the super Schur functions
$\{s_\lambda(\mathbf{q}/\mathbf{u}) \, | \, \lambda \vdash m,\ \lambda_{k+1} \leq j\}$ are linearly independent \cite[Lemma 6.4]{BereleRegev}.

The super Schur functions $s_\lambda(\mathbf{q}/\mathbf{u})$ are characters of the simple $\mathfrak{gl}(k|j)$-modules $U^\lambda_{k|j}$ which appear in the decomposition of $\Sym(\mathbb{C}^{k|j} \otimes V)$ via Howe duality (see \cite{Howe1989, Howe}).
The following result is due to Howe \cite{Howe1989} (see also \cite[Theorem 5.19; Chapter 5: Notes]{ChengWangBook}).

\begin{theorem}[$(\mathfrak{gl}(k|j), \GL(n))$-Howe Duality]\label{thm:super-Howe}
    For all $d \geq 0$, we have that
    \begin{equation}
        \Sym^d(\C^{k|j} \otimes V) \cong \bigoplus_{\substack{\lambda \in P(k,j,n)\\ \lambda \vdash d}} U^\lambda_{k|j} \otimes U^\lambda_{n},
    \end{equation}
    where for each $\lambda \in P(k,j,n)$, $U^\lambda_{k|j}$ is a simple $\mathfrak{gl}(k|j)$-module with character the super Schur function $s_\lambda(\mathbf{q}/\mathbf{u})$, and $U^\lambda_{n}$ is a simple $\GL(n)$-module with character the Schur function $s_\lambda$.
\end{theorem}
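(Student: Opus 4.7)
The plan is a three-step argument: compute the full character as a $\GL(k)\times\GL(j)\times\GL(n)$-module, decompose using semisimplicity of $\GL(n)$, and then identify each multiplicity space as a simple $\mathfrak{gl}(k|j)$-module.

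First, using $\Sym(\C^{k|j}\otimes V) = \Sym(\C^k \otimes V)\otimes \bigwedge(\C^j \otimes V)$, the character as a $\GL(k)\times\GL(j)\times\GL(n)$-module equals
\[
\prod_{i,r}\frac{1}{1-q_i z_r}\cdot \prod_{s,r}(1+u_s z_r).
\]
Expanding the first factor via the Cauchy identity and the second via its dual, and regrouping via the Littlewood--Richardson rule, the coefficient of $s_\lambda(\mathbf{z})$ collects as
\[
\sum_{\mu,\rho}c^{\lambda}_{\mu\rho}\, s_\mu(\mathbf{q})\,s_{\rho'}(\mathbf{u}) \;=\; \sum_{\nu\subseteq\lambda}s_\nu(\mathbf{q})\,s_{\lambda'/\nu'}(\mathbf{u}) \;=\; s_\lambda(\mathbf{q}/\mathbf{u}),
\]
where the first equality uses the symmetry $c^{\lambda'}_{\nu'\rho'}=c^{\lambda}_{\nu\rho}$. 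The indexing collapses to $\lambda \in P(k,j,n)$ because $s_\lambda(\mathbf{q}/\mathbf{u})$ vanishes when $\lambda_{k+1}>j$ (a standard hook-Schur vanishing) and $s_\lambda(\mathbf{z})$ evaluated at $n$ variables vanishes when $\ell(\lambda)>n$; restricting to total degree $d$ further imposes $\lambda\vdash d$.

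Second, because $\GL(n)$ is reductive and we work over $\C$, Weyl's theorem yields
\[
\Sym^d(\C^{k|j}\otimes V)=\bigoplus_{\substack{\lambda\in P(k,j,n)\\ \lambda\vdash d}} M^\lambda\otimes U^\lambda_n,
\]
where the multiplicity space $M^\lambda = \mathrm{Hom}_{\GL(n)}(U^\lambda_n,\Sym^d(\C^{k|j}\otimes V))$ is naturally a $\mathfrak{gl}(k|j)$-module since the two actions commute on the ambient space, and its super character is $s_\lambda(\mathbf{q}/\mathbf{u})$ by step one.

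The main obstacle is step three: showing that each $M^\lambda$ is \emph{simple} as a $\mathfrak{gl}(k|j)$-module. This is subtle precisely because $\mathfrak{gl}(k|j)$ is not reductive, so complete reducibility on that side is unavailable. I would argue via a super Schur--Sergeev double-centralizer theorem on the tensor space $(\C^{k|j}\otimes V)^{\otimes d}$: the images of $U(\mathfrak{gl}(k|j))$ and $\C[\mathfrak{S}_d]$ in $\mathrm{End}((\C^{k|j}\otimes V)^{\otimes d})$ are mutual centralizers, yielding a bimodule decomposition indexed by $P(k,j,d)$ with simple $\mathfrak{gl}(k|j)$-summands $U^\lambda_{k|j}$; passing to $\mathfrak{S}_d$-coinvariants transports this to $\Sym^d$, and coupling with classical $(\GL(n),\mathfrak{S}_d)$ Schur--Weyl duality on $V^{\otimes d}$ forces $M^\lambda\cong U^\lambda_{k|j}$. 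A more hands-on alternative is to produce an explicit highest-weight vector of weight $\lambda$ in $M^\lambda$ (a product of suitable bosonic and fermionic minors), show that the cyclic $\mathfrak{gl}(k|j)$-submodule it generates already accounts for the full character $s_\lambda(\mathbf{q}/\mathbf{u})$, and then conclude by comparison with step one.
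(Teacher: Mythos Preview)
The paper does not prove this theorem at all: it is quoted as a known result, attributed to Howe \cite{Howe1989} with a pointer to \cite[Theorem 5.19]{ChengWangBook}. So there is no ``paper's own proof'' to compare against; the theorem functions as background input to the later arguments.

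That said, your sketch is a reasonable outline of how the cited references establish the result. Steps one and two (the super Cauchy character computation and the isotypic decomposition over the reductive side $\GL(n)$) are standard and correct. Your first approach to step three --- coupling Sergeev's $(\mathfrak{gl}(k|j),\mathfrak{S}_d)$ duality on $(\C^{k|j})^{\otimes d}$ with classical $(\GL(n),\mathfrak{S}_d)$ Schur--Weyl on $V^{\otimes d}$ and extracting diagonal $\mathfrak{S}_d$-invariants --- is essentially the argument in Cheng--Wang; you would need to be a bit careful that the Koszul sign rule makes the $\mathfrak{S}_d$-invariants of $(\C^{k|j})^{\otimes d}\otimes V^{\otimes d}$ match $\Sym^d(\C^{k|j}\otimes V)$, but this works.

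Your alternative (3b) has a small gap as written. Showing that the cyclic submodule generated by an explicit highest-weight vector already has character $s_\lambda(\mathbf{q}/\mathbf{u})$ only proves $M^\lambda$ is \emph{cyclic}, not simple: a cyclic module with the correct character could still have a proper submodule. You would additionally need to show that the highest-weight space of $M^\lambda$ is one-dimensional (so every nonzero submodule contains your generator), or else invoke that a finite-dimensional $\mathfrak{gl}(k|j)$-module with super Schur character $s_\lambda(\mathbf{q}/\mathbf{u})$ for $\lambda\in P(k,j,n)$ must be the irreducible polynomial module --- which is true but is itself a nontrivial fact about the representation theory of $\mathfrak{gl}(k|j)$.
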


We note that taking characters of both sides of $(\mathfrak{gl}(k|j), \GL(n))$-Howe Duality gives a super Cauchy identity, which can also be used to compute the super Schur functions (see \cite[Chapter 5.2.2]{ChengWangBook}).

We describe the action of $\mathfrak{gl}(k|j)$ by superderivations.
For $1 \leq a,b \leq k$ and $1 \leq c,d \leq j$, we have
\begin{equation}\label{eq:superderivations}
    E_{a,b}^+ = \sum_{p=1}^n x_p^{(a)} \partial_{x_p^{(b)}},\
    E_{a,d}^\pm = \sum_{p=1}^n x_p^{(a)} \partial_{\theta_p^{(d)}},\
    E_{c,b}^\mp = \sum_{p=1}^n \theta_p^{(c)} \partial_{x_p^{(b)}},\
    E_{c,d}^- = \sum_{p=1}^n \theta_p^{(c)} \partial_{\theta_p^{(d)}},
\end{equation}
which realize the action of $\mathfrak{gl}(k|j)$ on $\Sym(\C^{k|j} \otimes V)$ \cite[Lemma 5.13]{ChengWangBook}. 
At a glance, the notation means that the $E^+$ operators raise and lower the degree of certain bosonic variables, the $E^\pm$ operators raise the degree of certain bosonic variables and lower the degree of certain fermionic variables, the $E^\mp$ operators raise the degree of certain fermionic variables and lower the degree of certain bosonic variables, and the $E^-$ operators raise and lower the degree of certain fermionic variables.
In the special case of $(k,j) = (1,1)$, see also \cite[Section 3]{SwansonWallach1} and \cite[Section 4.1]{SwansonWallach2}.
Note that the action of $\mathfrak{gl}(k|j)$ by superderivations preserves each degree $d$ component (with respect to the total degree) $\Sym^d(\C^{k|j} \otimes V)$.

We also have that $\GL(V)$ acts by matrix multiplication on $V$ and trivially on $\C^{k|j}$.
This action also preserves $\Sym^d(\C^{k|j} \otimes V)$.
Since $G \subset \GL(V)$, $G$ acts by matrix multiplication on $V$ and trivially on $\C^{k|j}$, and the action preserves $\Sym^d(\C^{k|j} \otimes V)$.

We conclude this section with a consequence of the Jordan-H\"older theorem, on the multiplicities of simple modules occurring in a composition series of a quotient module (see \cite[\S11.3]{BourbakiAlgebraIIX}).

\begin{lemma}\label{lem:composition-series}
    Let $A$ be an associative algebra over $\C$. 
    Let $M$ be a finite-dimensional $A$-module, and let $N$ be an $A$-submodule of $M$. 
    Then for any simple $A$-module $S$ in a composition series of $M/N$, its multiplicity in $M/N$ is less than or equal to its multiplicity in $M$.
\end{lemma}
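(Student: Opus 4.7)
The plan is to reduce the statement to the standard Jordan--Hölder theorem, which is exactly what is invoked in the citation. Since $M$ is finite-dimensional over $\C$, both $N$ and $M/N$ are finite-dimensional $A$-modules, hence both admit composition series, and so does $M$ itself. The key observation is that composition series of $N$ and of $M/N$ can be spliced into a composition series of $M$ whose list of composition factors (with multiplicity) is exactly the union of those of $N$ and those of $M/N$.

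Concretely, I would first fix composition series
\begin{equation*}
0 = N_0 \subsetneq N_1 \subsetneq \cdots \subsetneq N_r = N
\end{equation*}
of $N$ and
\begin{equation*}
0 = \overline{M}_0 \subsetneq \overline{M}_1 \subsetneq \cdots \subsetneq \overline{M}_s = M/N
\end{equation*}
of $M/N$. Pulling the latter back under the quotient map $M \twoheadrightarrow M/N$ yields a chain of $A$-submodules $N = M_0 \subsetneq M_1 \subsetneq \cdots \subsetneq M_s = M$ with $M_i/M_{i-1} \cong \overline{M}_i/\overline{M}_{i-1}$. Concatenating with the composition series of $N$ produces a composition series of $M$ whose factors are, as a multiset, the disjoint union of the factors of the two original series.

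Now I would apply Jordan--Hölder to $M$: any two composition series of $M$ have the same multiset of composition factors up to isomorphism. Therefore, for every simple $A$-module $S$, its multiplicity in $M$ equals the sum of its multiplicity in $N$ and its multiplicity in $M/N$. Both summands are nonnegative integers, so the multiplicity of $S$ in $M/N$ is bounded above by its multiplicity in $M$, which is the desired inequality.

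There is essentially no serious obstacle here: the only thing to verify is that the standard Jordan--Hölder framework applies, which it does because finite-dimensionality over $\C$ guarantees that the modules have finite length. The splicing argument handles the passage between $M$, $N$, and $M/N$ cleanly, and the conclusion follows from the invariance of composition factors.
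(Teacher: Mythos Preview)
Your proposal is correct and matches the paper's approach exactly: the paper does not give a proof of this lemma, merely stating it as a consequence of the Jordan--H\"older theorem with a citation to Bourbaki, and your splicing argument is precisely the standard derivation from that theorem.
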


\section{Proofs of the main results}\label{sec:bergeron-conjecture}

We prove the first main theorem from the introduction.
\begin{proof}[Proof of Theorem~\ref{thm:new-main-first-part}] 
We continue with the setup from the introduction, where $V = \C^n$ and
    \begin{align}
    \begin{aligned}
        \C[\bm{x}^{(1)}, \ldots, \bm{x}^{(k)}, \bm{\theta}^{(1)}, \ldots, \bm{\theta}^{(j)}] &= (\Sym V)^{\otimes k} \otimes (\bigwedge V)^{\otimes j}\\
        &= \Sym(V\otimes \C^{k|0} \oplus V\otimes\C^{0|j})\\
        &= \Sym(\C^{k|j} \otimes V).
    \end{aligned}
    \end{align}
    Our strategy is to first study $\Sym(\C^{k|j} \otimes V)$ and then consider the quotient $R_G^{(k,j)}$. 
    We will work with $\Sym^d(\C^{k|j} \otimes V)$, the part of homogeneous total degree $d$.
    
    Consider the simple $\GL(n)$-modules $U^\lambda_{n}$ which occur in Howe duality (Theorem~\ref{thm:super-Howe}).
    Since $V = \C^n$, $\GL(V)$ is $\GL(n)$, and we identify their modules.
    From $\GL(V)$ we may restrict to any finite subgroup $G$ of $\GL(V)$. Upon restricting the representations $U^\lambda_n$, since any finite-dimensional $G$-representation is semisimple, we have that
    \begin{equation}\label{eq:restriction}
        \Res^{\GL(V)}_G(U_n^\lambda) = \bigoplus_{\mu} (N^\mu)^{\oplus d_{\lambda\mu}},
    \end{equation}
    where the sum is over an indexing set for the simple $G$-representations $N^\mu$, and the $d_{\lambda\mu}$ are some nonnegative integers.
    In particular, the $d_{\lambda\mu}$ are finite because $U^\lambda_n$ is finite-dimensional.
    
    Since the actions preserve $\Sym^d(\C^{k|j} \otimes V)$, it is a $(\mathfrak{gl}(k|j),G)$-module (equivalently, a $U(\mathfrak{gl}(k|j)) \otimes \mathbb{C}[G]$-module). 
    By Howe duality (Theorem~\ref{thm:super-Howe}) and equation~\eqref{eq:restriction}, $\Sym^d(\C^{k|j} \otimes V)$ has decomposition:
    \begin{align}\label{eq:sym-decomposition}
    \begin{aligned}
        \Sym^d(\C^{k|j} \otimes V)  &\cong \bigoplus_{\substack{\lambda \in P(k,j,n)\\\lambda\vdash d}} U^\lambda_{k|j} \otimes \left(\bigoplus_{\mu} (N^\mu)^{\oplus d_{\lambda\mu}}\right)\\
        &\cong \bigoplus_{\substack{\lambda \in P(k,j,n)\\\lambda\vdash d}} \bigoplus_{\mu}U^\lambda_{k|j} \otimes \left(N^\mu\right)^{\oplus d_{\lambda\mu}}.
    \end{aligned}
    \end{align}

    Let $\Sym(\C^{k|j} \otimes V)^G_d$ denote the total degree $d$ polynomials in $\Sym(\C^{k|j} \otimes V)^G$.
    Since the $\mathfrak{gl}(k|j)$-action is degree-preserving and the $\mathfrak{gl}(k|j)$ and $G$-actions commute, $\Sym(\C^{k|j} \otimes V)^G_d$ is a $U(\mathfrak{gl}(k|j)) \otimes \C[G]$-submodule of $\Sym(\C^{k|j} \otimes V)$.
    Then 
    \begin{equation}
        \Sym(\C^{k|j} \otimes V)^G_+ = \bigoplus_{d \geq 1} \Sym(\C^{k|j} \otimes V)^G_d
    \end{equation}
    is a $U(\mathfrak{gl}(k|j)) \otimes \mathbb{C}[G]$-submodule of $\Sym(\C^{k|j} \otimes V)$.

    We turn our attention to $R_G^{(k,j)}$. From its definition, we have that
    \begin{equation}
        R_G^{(k,j)} = \Sym(\C^{k|j} \otimes V)/ \langle \Sym(\C^{k|j} \otimes V)^{G}_+ \rangle.
    \end{equation}
    Consider the multiplication map $m: \Sym(\C^{k|j} \otimes V) \otimes \Sym(\C^{k|j} \otimes V) \to \Sym(\C^{k|j} \otimes V)$. 
    Restrict to the map $m': \Sym(\C^{k|j} \otimes V) \otimes \Sym(\C^{k|j} \otimes V)^G_+ \to \Sym(\C^{k|j} \otimes V)$.
    The image of $m'$, which is $\langle \Sym(\C^{k|j} \otimes V)^{G}_+ \rangle$, is a $ U(\mathfrak{gl}(k|j)) \otimes \mathbb{C}[G]$-submodule of $\Sym(\C^{k|j} \otimes V)$.
    Thus $R_G^{(k,j)}$ is a quotient module of $\Sym(\C^{k|j} \otimes V)$.

    Since $U^\lambda_{k|j}$ is a finite-dimensional simple $U(\mathfrak{gl}(k|j))$-module and $N^\mu$ is a finite-dimensional simple $\C[G]$-module, then $U^\lambda_{k|j} \otimes N^\mu$ is a finite-dimensional simple $U(\mathfrak{gl}(k|j)) \otimes \C[G]$-module, by \cite[\S12.1 Theorem 1(a)]{BourbakiAlgebraIIX}.
    This implies that 
    \begin{equation}\label{eq:sym-d-expansion}
        \Sym^d(\C^{k|j} \otimes V)  
        \cong \bigoplus_{\substack{\lambda \in P(k,j,n)\\\lambda\vdash d}} \bigoplus_{\mu} \left(U^\lambda_{k|j} \otimes N^\mu\right)^{\oplus d_{\lambda\mu}}
    \end{equation}
    is a decomposition into simple $U(\mathfrak{gl}(k|j)) \otimes \C[G]$-modules, and is semisimple.
    Then $(R^{(k,j)}_G)_d$, the homogeneous part of total degree $d$ of $R^{(k,j)}_G$, is semisimple, by \cite[\S4.1 Corollary 3]{BourbakiAlgebraIIX}. 
    In the decomposition in equation~\eqref{eq:sym-d-expansion}, there are finitely many $\lambda$ indexing the sum and the decomposition in equation~\eqref{eq:restriction} is into finitely many simple modules.
    Thus $\Sym^d(\C^{k|j} \otimes V)$ has a direct sum decomposition into finitely many simple modules, so it is of finite length.
    Then by Lemma~\ref{lem:composition-series},
    \begin{equation}
        (R^{(k,j)}_G)_d  
        \cong \bigoplus_{\substack{\lambda \in P(k,j,n)\\\lambda\vdash d}} \bigoplus_{\mu} \left(U^\lambda_{k|j} \otimes N^\mu\right)^{\oplus c_{\lambda\mu}},
    \end{equation}
    for some integers $0 \leq c_{\lambda\mu} \leq d_{\lambda\mu}$.
    By summing over all $d$, we get that
    \begin{equation}\label{eq:last-in-proof}
        R_G^{(k,j)} \cong \bigoplus_{\lambda \in P(k,j,n)} \bigoplus_{\mu} \left(U^\lambda_{k|j} \otimes N^\mu\right)^{\oplus c_{\lambda\mu}}.
    \end{equation}
\end{proof}

Next, we prove a lemma, which will be used in the subsequent proof of Theorem~\ref{thm:G-main-theorem}.
\begin{lemma}\label{lem:intersection}
    For $k'\leq k$ and $j'\leq j$, regard $\Sym(\C^{k'|j'}\otimes V)\subseteq \Sym(\C^{k|j}\otimes V)$ as the subalgebra generated by the first $k'$ bosonic and first $j'$ fermionic variable sets. Then
    \begin{equation}\label{eq:intersection}
        \langle \Sym(\C^{k|j}\otimes V)^G_+ \rangle \cap \Sym(\C^{k'|j'}\otimes V) = \langle \Sym(\C^{k'|j'}\otimes V)^G_+ \rangle,
    \end{equation}
    where the left ideal is taken in $\Sym(\C^{k|j}\otimes V)$ and the right in $\Sym(\C^{k'|j'}\otimes V)$.
\end{lemma}

\begin{proof}
    The general identity follows by removing one variable set at a time and intersecting, so it suffices to treat a single bosonic step (the fermionic step is analogous):
    \begin{equation}\label{eq:intersection-step}
        \langle \Sym(\C^{k|j}\otimes V)^G_+ \rangle \cap \Sym(\C^{k-1|j}\otimes V) = \langle \Sym(\C^{k-1|j}\otimes V)^G_+ \rangle.
    \end{equation}
    The containment $\supseteq$ is immediate, since $\Sym(\C^{k-1|j}\otimes V)^G_+\subseteq \Sym(\C^{k|j}\otimes V)^G_+$ and the right-hand ideal lies in $\Sym(\C^{k-1|j}\otimes V)$.

    For the reverse containment $\subseteq$, let $\pi\colon \Sym(\C^{k|j}\otimes V) \to \Sym(\C^{k-1|j}\otimes V)$ be the $\C$-algebra homomorphism setting $\bm{x}^{(k)}=0$. Since $G$ acts diagonally on each set of variables, it preserves $q_k$-degree, so $\pi$ is $G$-equivariant. Take $f \in \langle \Sym(\C^{k|j}\otimes V)^G_+ \rangle \cap \Sym(\C^{k-1|j}\otimes V)$ and write $f=\sum_i a_i g_i$ with $a_i\in \Sym(\C^{k|j}\otimes V)$ and $g_i\in \Sym(\C^{k|j}\otimes V)^G_+$. As $f$ has zero $q_k$-degree, $f = \pi(f) = \sum_i \pi(a_i)\pi(g_i)$. Each $\pi(g_i)$ is the zero $q_k$-degree part of $g_i$, hence $G$-invariant with the same vanishing constant term, so $\pi(g_i)\in \Sym(\C^{k-1|j}\otimes V)^G_+$. Therefore $f\in \langle \Sym(\C^{k-1|j}\otimes V)^G_+ \rangle$.
\end{proof}

Now we prove our second main theorem.
\begin{proof}[Proof of Theorem~\ref{thm:G-main-theorem}]
    Given the decomposition in Theorem~\ref{thm:new-main-first-part}, take the $U(\mathfrak{gl}(k|j)) \otimes \C[G]$ character to conclude that
        \begin{equation}\label{eq:k-j-proof}
        \Char(R_G^{(k,j)};\mathbf{q};\mathbf{u}) = \sum_{\lambda \in P(k,j,n)} \sum_{\chi^\mu \in \IrrChar(G)} c_{\lambda\mu} s_\lambda(\mathbf{q}/\mathbf{u}) \chi^\mu.
    \end{equation}
    It remains to show that as long as $c_{\lambda\mu}$ appears in a Frobenius series expansion, it does not depend on the choice of $(k,j)$. 

    By Lemma~\ref{lem:intersection},
    \begin{equation}\label{eq:claim}
        \langle \Sym(\C^{k|j}\otimes V)^G_+ \rangle\big|_{\bm{x}^{(k)}=0} = \langle \Sym(\C^{k|j}\otimes V)^G_+ \rangle \cap \Sym(\C^{k-1|j}\otimes V) = \langle \Sym(\C^{k-1|j}\otimes V)^G_+ \rangle,
    \end{equation}
    where the first equality holds because the ideal is $q_k$-homogeneous.
    It follows that $R_G^{(k-1,j)} = R_G^{(k,j)}|_{\bm{x}^{(k)}=0}$.
    Since only the variables in $\bm{x}^{(k)}$ contribute to the $q_k$-degree,
    \begin{equation}\label{eq:q_k=0}
        \Char(R_G^{(k-1,j)};q_1,\ldots,q_{k-1};\mathbf{u}) = \Char(R_G^{(k,j)};q_1,\ldots,q_{k-1},q_k;\mathbf{u})|_{q_k = 0}.
    \end{equation}
    
    On the one hand, suppose we are given equation~\eqref{eq:k-j-proof}; then by equation~\eqref{eq:q_k=0}, 
    \begin{align}\label{eq:proof-first}
    \begin{aligned}
        \Char(R_G^{(k-1,j)};q_1,\ldots,q_{k-1};\mathbf{u}) 
        &= \sum_{\lambda \in P(k,j,n)} \sum_{\chi^\mu \in \IrrChar(G)} c_{\lambda\mu} s_\lambda(q_1,\ldots,q_{k-1},q_k/\mathbf{u})|_{q_k =0}\cdot \chi^\mu\\
        &= \sum_{\lambda \in P(k-1,j,n)} \sum_{\chi^\mu \in \IrrChar(G)} c_{\lambda\mu} s_\lambda(q_1,\ldots,q_{k-1},0/\mathbf{u}) \chi^\mu,
    \end{aligned}
    \end{align}
    where the last equality follows because when $q_k=0$, the restriction on which super Schur functions are nonzero changes from $\lambda_{k+1} \leq j$ to $\lambda_{k} \leq j$.
    
    On the other hand, we can compute directly that for some nonnegative integer coefficients $\tilde{c}_{\lambda\mu}$,
    \begin{align}\label{eq:proof-second}
    \begin{aligned}
        \Char(R_G^{(k-1,j)};q_1,\ldots,q_{k-1};\mathbf{u}) &= \sum_{\lambda \in P(k-1,j,n)} \sum_{\chi^\mu \in \IrrChar(G)} \tilde{c}_{\lambda\mu} s_\lambda(q_1,\ldots,q_{k-1}/\mathbf{u}) \chi^\mu\\
        &= \sum_{\lambda \in P(k-1,j,n)} \sum_{\chi^\mu \in \IrrChar(G)} \tilde{c}_{\lambda\mu} s_\lambda(q_1,\ldots,q_{k-1},0/\mathbf{u}) \chi^\mu,
    \end{aligned}
    \end{align}
    where we applied restriction (equation~\eqref{eq:restriction1}). 
    Thus we conclude that $\tilde{c}_{\lambda\mu} = c_{\lambda\mu}$ for all $\lambda \in P(k-1,j,n)$, i.e., for those $\lambda$ which appear in both expansions (equations~\eqref{eq:proof-first} and \eqref{eq:proof-second}). 
    Checking the character series for $R_G^{(k,j-1)}$ is analogous, and we conclude that the $c_{\lambda\mu}$ do not depend on $(k,j)$.
\end{proof}

In the case of the symmetric group $\mathfrak{S}_n$ for a fixed $n$, the purely bosonic case completely determines the mixed bosonic-fermionic case, as long as the number of sets of bosonic variables is large enough (at least $n-1$).
The purely fermionic case completely determines the mixed bosonic-fermionic case, as long as the number of sets of fermionic variables is large enough (at least $\binom{n}{2}$).

\begin{corollary}\label{cor:diagonal-supersymmetry-threshold} 
Fix $n \geq 1$. 
The multigraded Frobenius series of $R_n^{(k,j)}$ is determined by the multigraded Frobenius series of $R_n^{(K,0)}$ for $K\geq n-1$ or by the multigraded Frobenius series of $R_n^{(0,J)}$ for $J \geq \binom{n}{2}$. 
In particular, the coefficients $c_{\lambda \mu}$ in equation~\eqref{eq:theorem} are determined by
\begin{equation}\label{eq:bosonic}
    \Frob(R_n^{(K,0)}; q_1,\ldots, q_{K}) = \sum_{\lambda \in P(K,0,n)} \sum_{\mu \vdash n}  c_{\lambda\mu} s_\lambda(q_1,\ldots, q_{K}) s_\mu(\mathbf{z}),
\end{equation}
or by
\begin{equation}\label{eq:fermionic}
    \Frob(R_n^{(0,J)}; u_1,\ldots, u_{J}) = \sum_{\lambda \in P(0,J,n)} \sum_{\mu \vdash n} c_{\lambda\mu} s_{\lambda'}(u_1,\ldots, u_{J}) s_\mu(\mathbf{z}).
\end{equation}
\end{corollary}

\begin{proof}
    By Corollary~\ref{cor:main-theorem}, it remains to check that equation~\eqref{eq:bosonic} or equation~\eqref{eq:fermionic} determines the $c_{\lambda \mu}$. 

    For the bosonic case, by \cite{Bergeron2013} the multigraded Frobenius series of $R_n^{(K,0)}$ is coefficient stable once $K \geq n-1$, so all coefficients are determined by equation~\eqref{eq:bosonic}. 

    For the fermionic case, recall from Corollary~\ref{cor:main-theorem} that the $c_{\lambda\mu}$ do not depend on $(k,j)$. 
    Hence it suffices to determine which of them appear in equation~\eqref{eq:fermionic}.
    Since
    \begin{equation}
        s_{\lambda}(0/u_1,\ldots,u_J) = s_{\lambda'}(u_1,\ldots,u_J)
    \end{equation}
    is nonzero if and only if $\ell(\lambda') = \lambda_1 \leq J$, then $\Frob(R_n^{(0,J)})$ determines exactly those $c_{\lambda\mu}$ with $\lambda_1 \leq J$. 
    It therefore suffices to show that $\lambda_1 \leq \binom{n}{2}$ for every $\lambda$ indexing a nonzero $c_{\lambda\mu}$.

    We bound $\lambda_1$ using the purely bosonic coinvariant ring. 
    For one set of bosonic variables, the dimension of the homogeneous component $(R_n^{(1,0)})_{r}$ is $0$ for all $r > \binom{n}{2}$ (this follows from the fact that $\Hilb(R_n^{(1,0)};q) = [n]_q!$ \cite{Artin}). 
    By considering the generators for their defining ideals, the ring $R_n^{(K,0)}$ is a quotient module of $(R_n^{(1,0)})^{\otimes K}$.
    Consequently, $(R_n^{(K,0)})_{r_1,\ldots,r_K} = 0$ whenever some $r_i > \binom{n}{2}$. 

    Now suppose $c_{\lambda \mu} > 0$ and take $K \geq \ell(\lambda)$.
    By Theorem~\ref{thm:new-main-first-part}, $R_n^{(K,0)}$ contains the simple $\GL(K)$-module $U^\lambda_K$, whose highest weight vector has multidegree $(\lambda_1,\ldots,\lambda_K)$, so $(R_n^{(K,0)})_{\lambda_1,\ldots,\lambda_K} \neq 0$. 
    By the vanishing established above, $\lambda_1 \leq \binom{n}{2}$.
    Therefore once $J \geq \binom{n}{2}$, every nonzero $c_{\lambda\mu}$ is determined by
    equation~\eqref{eq:fermionic}.
\end{proof}
Note that the bounds in Corollary~\ref{cor:diagonal-supersymmetry-threshold} are tight (see Remark~\ref{rmk:tightness}).

Let $R_n^{(\infty,0)}$ be the direct limit of the $R_n^{(k,0)}$, and let
$R_n^{(0,\infty)}$ be the direct limit of the $R_n^{(0,j)}$, with transition maps as in Section~\ref{sec:introduction}. 
By taking the limit as $k$ or $j$ goes to infinity, we recover the form, with infinitely many sets of variables, in which F. Bergeron framed this determination \cite{Bergeron2020, BIRS}: the mixed bosonic-fermionic case may be determined from the exclusively bosonic or the exclusively fermionic case.

\begin{corollary}\label{cor:infinite-threshold}
Fix $n \geq 1$. The multigraded Frobenius series of $R_n^{(\infty,\infty)}$ may be calculated from the multigraded Frobenius series of $R_n^{(\infty,0)}$ or $R_n^{(0,\infty)}$. In particular, the coefficients $c_{\lambda \mu}$ in equation~\eqref{eq:theorem} are determined by
\begin{equation}
    \Frob(R_n^{(\infty,0)}; q_1,q_2,\ldots) = \sum_{\lambda, \ \ell(\lambda) \leq n} \sum_{\mu \vdash n}  c_{\lambda\mu} s_\lambda(q_1,q_2,\ldots) s_\mu(\mathbf{z}),
\end{equation}
or by
\begin{equation}
    \Frob(R_n^{(0,\infty)}; u_1,u_2,\ldots) = \sum_{\lambda, \ \ell(\lambda) \leq n} \sum_{\mu \vdash n} c_{\lambda\mu} s_{\lambda'}(u_1,u_2,\ldots) s_\mu(\mathbf{z}).
\end{equation}
\end{corollary}

\section{Universal series for the symmetric group}\label{sec:discussion}

Currently, combinatorial or algebraic formulas have been proven for the Frobenius series of $R_n^{(k,j)}$ only in the cases of $R_n^{(1,0)}$ \cite{Chevalley}, $R_n^{(2,0)}$ \cite{Haiman2002}, $R_n^{(0,1)}$,\footnote{Since $R_n^{(0,1)} = \C[\theta_1,\ldots,\theta_n]/\langle \theta_1+\cdots+\theta_n\rangle \cong \C[\theta_1,\ldots,\theta_{n-1}]$, this just reduces to an exterior algebra.} $R_n^{(0,2)}$ \cite{KimRhoades2022}, and $R_n^{(1,1)}$ \cite{MuraiRhoadesWilson}. All cases where $k + j \geq 3$ remain open.

A natural question is whether the graded Frobenius series for $R_n^{(1,0)}$ or $R_n^{(0,1)}$ determines the other; similarly whether the bigraded Frobenius series for $R_n^{(2,0)}$, $R_n^{(1,1)}$, or $R_n^{(0,2)}$ determines any of the others.
We will answer these in the negative in Proposition~\ref{prop:negative}.

More generally, fix $k,j \geq 0$ and consider the following.
Suppose we know the following multigraded Frobenius series expansion for fixed $n$:
\begin{equation}\label{eq:expansion}
    \Frob(R_n^{(k,j)}; \mathbf{q};\mathbf{u}) = \sum_{\lambda \in P(k,j,n)}\sum_{\mu \vdash n}  c_{\lambda\mu} s_\lambda(\mathbf{q}/\mathbf{u})s_\mu(\mathbf{z}).
\end{equation}

For any $0 \leq k' \leq k$ and $0 \leq j' \leq j$, because of restriction (equations~\eqref{eq:restriction1} and \eqref{eq:restriction2}), this determines the $c_{\lambda\mu}$ necessary to expand
\begin{equation}\label{eq:expansion-prime}
    \Frob(R_n^{(k',j')}; q_1,\ldots,q_{k'};u_1,\ldots,u_{j'}) = \sum_{\lambda \in P(k',j',n)}\sum_{\mu \vdash n}  c_{\lambda\mu} s_\lambda(q_1,\ldots,q_{k'}/u_1,\ldots,u_{j'})s_\mu(\mathbf{z}).
\end{equation}

Now we can prove Proposition~\ref{prop:universal-series}.

\begin{proof}[Proof of Proposition~\ref{prop:universal-series}]
The above discussion justifies calling
\begin{equation}
    \Frob(R_n^{(\infty,\infty)}; \mathbf{q};\mathbf{u}) =  \sum_{\lambda,\ \ell(\lambda) \leq n} \sum_{\mu \vdash n}c_{\lambda\mu} s_\lambda(\mathbf{q}/\mathbf{u})s_\mu(\mathbf{z})
\end{equation}
the universal series, since this determines all $c_{\lambda\mu}$ which could appear in such an expansion for any $k,j$.
\end{proof}

To show that the multigraded Frobenius series for $(k',j')$, where $k' > k$ or $j' > j$, cannot be determined from the multigraded Frobenius series for $(k,j)$, it suffices to exhibit specific $\hat{\lambda}, \hat{\mu}$ such that:
\begin{itemize}
    \item $c_{\hat{\lambda} \hat{\mu}} > 0$,
    \item $\hat{\lambda}_{k'+1} \leq j'$, and
    \item $\hat{\lambda}_{k+1} > j$.
\end{itemize}

We make the following natural conjecture.

\begin{conjecture}
For sufficiently large $n$, there are nonzero $c_{\lambda\mu}$ in the expansion of the multigraded Frobenius series of $R_n^{(k',j')}$ (equation~\eqref{eq:expansion-prime}) which are not determined by the expansion of the  multigraded Frobenius series of $R_n^{(k,j)}$ (equation~\eqref{eq:expansion}) when $(k,j)$ and $(k',j')$ are incomparable in the componentwise order.
\end{conjecture}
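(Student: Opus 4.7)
The plan is to exhibit explicit witness partitions satisfying the three bullet-point criteria stated just before the conjecture. Incomparability is symmetric in $(k,j)$ and $(k',j')$, so without loss of generality let $k < k'$ and $j > j'$. I also restrict to the non-degenerate range $k+1 \leq n$, since otherwise every $\lambda$ with $\ell(\lambda) \leq n$ automatically has $\lambda_{k+1} = 0$, forcing $P(k',j',n) \subseteq P(k,j,n)$ and making the desired conclusion unavailable (so one should implicitly read the conjecture as requiring $\min(k,k') < n$).

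My candidate for $\hat{\lambda}$ is the rectangle $\bigl((j+1)^{k+1}\bigr)$. Direct checking gives $\hat{\lambda}_{k+1} = j+1 > j$ and $\hat{\lambda}_{k+2} = 0$, so since $k'+1 \geq k+2$ we get $\hat{\lambda}_{k'+1} = 0 \leq j'$. Thus $\hat{\lambda} \in P(k',j',n) \setminus P(k,j,n)$, and the problem reduces to proving $c_{\hat{\lambda}\hat{\mu}} > 0$ for some $\hat{\mu} \vdash n$. To establish positivity, I would first invoke Proposition~\ref{prop:universal-series} to pass to the stable limit $R_n^{(\infty,\infty)}$, where $c_{\hat{\lambda}\hat{\mu}}$ agrees with the corresponding universal coefficient, and then construct an explicit highest-weight vector of $\mathfrak{gl}(\infty|\infty)$-type $U^{\hat{\lambda}}$ in the total-degree $(k+1)(j+1)$ part of that ring. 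Using the super-semistandard-tableau realization of $U^{\hat{\lambda}}$ together with Howe duality (Theorem~\ref{thm:super-Howe}), the candidate is built from a super Young symmetrizer of rectangular shape applied to a suitable product of bosonic monomials in $\bm{x}^{(1)},\ldots,\bm{x}^{(k+1)}$; the resulting vector then carries an explicit irreducible $\mathfrak{S}_n$-module $U^{\hat{\mu}}_n$.

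The main obstacle is showing that this constructed class remains nonzero in the quotient, i.e., that it does not already lie in the ideal $\langle \Sym(\C^{k'|j'}\otimes V)^{\mathfrak{S}_n}_+\rangle$. My primary approach is to pair it against an $\mathfrak{S}_n$-harmonic polynomial of type $\hat{\mu}$ via the Fischer-type inner product $\langle f, g \rangle = [f(\partial)\,g]_0$, extended to the super ring through the superderivations of equation~\eqref{eq:superderivations}; non-vanishing follows once one checks that the harmonic subspace of total degree $(k+1)(j+1)$ contains a copy of $U^{\hat{\lambda}} \otimes U^{\hat{\mu}}_n$. A fallback strategy is an induction on $k+j$ using the cancellation property of super Schur functions (equation~\eqref{eq:cancellation}): setting $u_j = -q_k$ relates the $(k,j)$ universal-series expansion to the $(k-1,j-1)$ one, seeded by the explicit small cases treated in Proposition~\ref{prop:coefficients-KR}. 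Tracking exactly which coefficients $c_{\lambda\mu}$ are genuinely new after each cancellation step, rather than inherited from smaller $(k,j)$, is the most delicate point in this alternative route and is where I expect the hardest bookkeeping.
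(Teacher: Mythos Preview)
The statement you are attempting to prove is labeled a \emph{Conjecture} in the paper; the paper does not prove it. What the paper does prove is Proposition~\ref{prop:negative}, which verifies the conjecture only for the six ordered pairs among $\{(1,0),(0,1)\}$ and among $\{(2,0),(1,1),(0,2)\}$, in each case by exhibiting an explicit $(\hat\lambda,\hat\mu)$ and appealing to known computations (Proposition~\ref{prop:coefficients-KR}, Corollary~\ref{cor:coeff-1-1-sign}, or the Lee--Li--Loehr calculation) to certify $c_{\hat\lambda\hat\mu}>0$. There is therefore no general proof in the paper for you to match.

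Your proposal is not a proof but a strategy outline, and you correctly identify the gap yourself. The choice $\hat\lambda = \bigl((j+1)^{k+1}\bigr)$ does satisfy the two shape constraints $\hat\lambda_{k'+1}\le j'$ and $\hat\lambda_{k+1}>j$ (under your hypothesis $k<k'$, $k+1\le n$), so the entire content of the conjecture collapses to the single assertion that $c_{\hat\lambda\hat\mu}>0$ for some $\hat\mu\vdash n$. But this is precisely what is not known in general: nothing in the paper guarantees that an arbitrary $\lambda$ with $\ell(\lambda)\le n$ actually occurs with positive multiplicity in the universal series. Your ``primary approach'' (construct a highest-weight vector and pair it against a harmonic) and your ``fallback'' (induction via cancellation) are both reasonable plans, but neither is executed; in particular, checking that the harmonic space in the relevant degree contains a copy of $U^{\hat\lambda}\otimes N^{\hat\mu}$ is exactly as hard as the positivity you are trying to prove, and the cancellation route does not obviously produce new nonzero coefficients rather than merely transporting old ones.

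Your side remark that the conjecture requires the implicit hypothesis $\min(k,k')<n$ is well taken: once $k\ge n$ one has $P(k,j,n)=\{\lambda:\ell(\lambda)\le n\}$, so $R_n^{(k,j)}$ already determines every coefficient and the conjecture as literally stated fails. That caveat is worth recording, but it does not bring you closer to the positivity statement that is the actual open problem.
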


Then we verify some small cases of this conjecture with the following result.

\begin{proposition}\label{prop:negative}
For sufficiently large $n$, there are nonzero $c_{\lambda\mu}$ in the expansion of the multigraded Frobenius series of $R_n^{(k',j')}$ (equation~\eqref{eq:expansion-prime}) which are not determined by the expansion of the multigraded Frobenius series of $R_n^{(k,j)}$ (equation~\eqref{eq:expansion}) when
$(k,j)$ and $(k',j')$ are either permutation of $(1,0)$ and $(0,1)$, or when $(k,j)$ and $(k',j')$ are any two distinct choices of $(2,0)$, $(1,1)$, and $(0,2)$.
\end{proposition}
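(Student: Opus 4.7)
The plan is to apply the criterion stated in the paragraph immediately preceding the proposition: for each ordered pair $((k,j),(k',j'))$ in question, it suffices to exhibit a partition $\hat\lambda \in P(k',j',n) \setminus P(k,j,n)$ and a partition $\hat\mu \vdash n$ with $c_{\hat\lambda\hat\mu} > 0$. By the universal series of Proposition~\ref{prop:universal-series}, the value $c_{\hat\lambda\hat\mu}$ is independent of $(k,j)$, so its positivity may be verified using any $R_n^{(K,J)}$ whose Frobenius series is known and for which $\hat\lambda \in P(K,J,n)$.

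Two explicit Frobenius series produce witnesses for most of the directions. Chevalley's theorem gives $\Frob(R_n^{(1,0)}; q) = \sum_{\mu \vdash n}\bigl(\sum_{T \in \mathrm{SYT}(\mu)} q^{\mathrm{maj}(T)}\bigr) s_\mu(\mathbf{z})$, so $c_{(d),(n-1,1)} = 1$ for $1 \leq d \leq n-1$ (the unique SYT of shape $(n-1,1)$ with second-row entry $d+1$). The footnote's identification $R_n^{(0,1)} \cong \bigwedge W$, where $W$ is the reflection representation of $\mathfrak{S}_n$, gives $\Frob(R_n^{(0,1)}; u) = \sum_{k=0}^{n-1} u^k s_{(n-k,1^k)}(\mathbf{z}) = \sum_{k=0}^{n-1} s_{(1^k)}(/u)\, s_{(n-k,1^k)}(\mathbf{z})$, so $c_{(1^k),(n-k,1^k)} = 1$ for $0 \leq k \leq n-1$. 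These two families already settle both directions of the first claim (take $\hat\lambda = (2)$ with $\hat\mu=(n-1,1)$, and $\hat\lambda=(1,1)$ with $\hat\mu=(n-2,1,1)$, both for $n \geq 3$) and four of the six directions of the second claim: for $(0,2) \not\Rightarrow (2,0)$ and $(0,2) \not\Rightarrow (1,1)$ take $\hat\lambda = (3) \in P(2,0,n) \cap P(1,1,n) \setminus P(0,2,n)$ with $\hat\mu = (n-1,1)$; for $(2,0) \not\Rightarrow (0,2)$ and $(2,0) \not\Rightarrow (1,1)$ take $\hat\lambda = (1,1,1) \in P(0,2,n) \cap P(1,1,n) \setminus P(2,0,n)$ with $\hat\mu = (n-3,1,1,1)$ (both for $n \geq 4$).

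The two remaining directions, $(1,1) \not\Rightarrow (2,0)$ and $(1,1) \not\Rightarrow (0,2)$, both require a witness $\hat\lambda$ with $\hat\lambda_2 \geq 2$. The common choice $\hat\lambda = (2,2)$ lies in $P(2,0,n) \cap P(0,2,n) \setminus P(1,1,n)$ for $n \geq 4$, so by universality a single positivity check $c_{(2,2),\hat\mu} > 0$ handles both directions. This check would be carried out by expanding $\nabla e_n$ in the Schur basis using Haiman's theorem (or equivalently by applying the Kim-Rhoades formula for $R_n^{(0,2)}$) for a small value of $n$, and extracting the coefficient of $s_{(2,2)}(q,t) = q^2 t^2$ in each $s_\mu(\mathbf{z})$-part via triangular inversion between the monomial and Schur bases of two-variable symmetric polynomials.

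The main obstacle is this last computation. Because $q^2 t^2$ is the sole monomial of $s_{(2,2)}(q,t)$ but also appears in $s_{(3,1)}(q,t) = q^3 t + q^2 t^2 + qt^3$ and in $s_{(4)}(q,t) = q^4 + q^3 t + q^2 t^2 + qt^3 + t^4$, isolating $c_{(2,2),\mu}$ requires the inversion formula $c_{(2,2),\mu} = [q^2 t^2] C_\mu(q,t) - [q^3 t] C_\mu(q,t)$ within each total-degree component, where $C_\mu(q,t)$ denotes the Schur coefficient of $s_\mu(\mathbf{z})$ in $\nabla e_n$. This is a finite, routine calculation in a small basis; should it fail to yield a positive value at one small $n$, one repeats with the alternative two-row witness $\hat\lambda = (3,2)$ (for $(1,1) \not\Rightarrow (2,0)$) or $\hat\lambda = (2,2,1^b)$ (for $(1,1) \not\Rightarrow (0,2)$), and in each case a single successful positivity check suffices by universality.
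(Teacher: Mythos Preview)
Your overall strategy is the same as the paper's: apply the criterion in the paragraph preceding the proposition to each ordered pair $((k,j),(k',j'))$. For six of the eight directions your witnesses are actually \emph{simpler} than the paper's. Where the paper computes sign-isotypic coefficients $c_{\lambda,(1^n)}$ via the Swanson--Wallach formula (Corollary~\ref{cor:coeff-1-1-sign}) and the Kim--Rhoades result (Proposition~\ref{prop:coefficients-KR}), you instead read off $c_{(d),(n-1,1)}=1$ and $c_{(1^k),(n-k,1^k)}=1$ directly from the classical graded Frobenius series of $R_n^{(1,0)}$ and $R_n^{(0,1)}$. These elementary witnesses work exactly as you claim.

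The gap is in the two remaining directions, $(1,1)\nrightarrow(2,0)$ and $(1,1)\nrightarrow(0,2)$. You correctly note that a single witness $\hat\lambda=(2,2)\in P(2,0,n)\cap P(0,2,n)\setminus P(1,1,n)$ would settle both, but you do not actually produce a $\hat\mu$ with $c_{(2,2),\hat\mu}>0$; the last paragraph is a plan (``would be carried out'', ``should it fail\dots one repeats'') rather than a proof. Moreover, checking ``a small value of $n$'' does not match the scope of the proposition, which (as the paper's own proof makes explicit) is meant to hold for all sufficiently large $n$. Fortunately your plan succeeds cleanly: Proposition~\ref{prop:coefficients-KR}(v) with $\ell=0$, $\mu_2=2$, $\mu_1=n-2$ gives $c_{(2,2),(n-2,2)}=1$ for every $n\ge 5$, which finishes both directions at once. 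This is again simpler than the paper, which for $(1,1)\nrightarrow(2,0)$ invokes the Lee--Li--Loehr $\mathfrak{sl}(2)$-string decomposition of the $q,t$-Catalan numbers to exhibit $c_{(\binom{n}{2}-4,2),(1^n)}=1$, and for $(1,1)\nrightarrow(0,2)$ uses Proposition~\ref{prop:coefficients-KR}(iv) with $\hat\lambda=(2,2,1^{n-5})$.
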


Before proving Proposition~\ref{prop:negative}, we will find formulas for certain $c_{\lambda\mu}$ to use in the proof.
The following result classifies all $c_{\lambda\mu}$ which we can determine from $\Frob(R_n^{(0,2)};u,v)$.
\begin{proposition}\label{prop:coefficients-KR}
    For $\lambda$ a partition where each part is at most 2, and $\mu \vdash n$, the coefficients $c_{\lambda\mu}$ defined by
    \begin{equation}
        \Frob(R_n^{(\infty,\infty)}; \mathbf{q};\mathbf{u}) =\sum_{\lambda \in P(\infty,\infty,n)} \sum_{\mu \vdash n}  c_{\lambda\mu} s_\lambda(\mathbf{q}/\mathbf{u})s_\mu(\mathbf{z})
    \end{equation}
    are 1 if:
    \begin{enumerate}[(i)]
        \item $\mu = (n)$ and $\lambda = \varnothing$;
        \item $\mu = (1^n)$ and $\lambda = (1^{n-1})$;
        \item $\mu = (n-i,1^i)$ and $\lambda = (1^i)$ or $(2,1^{i-1})$ for $i \in \{1,\ldots,n-2\}$;
        \item $\mu = (\mu_1,\mu_1, 2^\ell, 1^{n-2\ell-2\mu_1})$ where $\mu_1 \geq 2$ and $\lambda = (2^{\ell+\mu_1}, 1^{n-2\ell-2\mu_1-1})$, $(2^{\ell+\mu_1-1}, 1^{n-2\ell-2\mu_1+1})$, or $(2^{\ell+\mu_1-1}, 1^{n-2\ell-2\mu_1})$;
        \item $\mu = (\mu_1,\mu_2, 2^\ell, 1^{n-2\ell-\mu_1-\mu_2})$ where $\mu_1 > \mu_2 \geq 2$ and $\lambda = (2^{\ell+\mu_2}, 1^{n-2\ell-\mu_1-\mu_2})$, $(2^{\ell+\mu_2-1}, 1^{n-2\ell-\mu_1-\mu_2+1})$, $(2^{\ell+\mu_2}, 1^{n-2\ell-\mu_1-\mu_2-1})$, or $(2^{\ell+\mu_2-1}, 1^{n-2\ell-\mu_1-\mu_2})$,
    \end{enumerate}
    and $0$ otherwise.
\end{proposition}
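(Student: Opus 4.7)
The plan is to extract the listed coefficients from the explicit bigraded Frobenius series of $R_n^{(0,2)}$ due to Kim and Rhoades \cite{KimRhoades2022}. By Corollary~\ref{cor:main-theorem} at $(k,j)=(0,2)$ and the identity $s_\lambda(/u,v) = s_{\lambda'}(u,v)$ (which follows from the definition $s_\lambda(\mathbf{q}/\mathbf{u}) = \sum_{\nu \subseteq \lambda} s_\nu(\mathbf{q}) s_{\lambda'/\nu'}(\mathbf{u})$ when $\mathbf{q}$ is empty, since $s_\nu$ of an empty alphabet is $0$ unless $\nu = \varnothing$), we have
\begin{equation*}
\Frob(R_n^{(0,2)};u,v) = \sum_{\lambda \in P(0,2,n)} \sum_{\mu \vdash n} c_{\lambda\mu}\, s_{\lambda'}(u,v)\, s_\mu(\mathbf{z}).
\end{equation*}
Since $\lambda \in P(0,2,n)$ forces $\lambda = (2^a,1^b)$, with conjugate $\lambda'=(a+b,a)$, one has $s_{\lambda'}(u,v) = (uv)^a\,h_b(u,v)$. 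The set $\{s_{(a+b,a)}(u,v)\}_{a,b\geq 0}$ is a basis of the ring of symmetric polynomials in $u,v$, so each $c_{(2^a,1^b)\mu}$ is uniquely determined by the Frobenius series.

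The strategy is then to rewrite Kim and Rhoades' formula as $\sum_{\mu \vdash n} F_\mu(u,v)\,s_\mu(\mathbf{z})$ and to expand each $F_\mu(u,v)$ in the basis $\{s_{(a+b,a)}(u,v)\}$. The expansion is greedy: the dominant monomial of $s_{(a+b,a)}(u,v)$ with respect to the lexicographic order $u \gg v$ is $u^{a+b}v^a$, so one reads off $c_{(2^a,1^b)\mu}$ as the coefficient of $u^{a+b}v^a$ in $F_\mu$, subtracts $c_{(2^a,1^b)\mu}\,s_{(a+b,a)}(u,v)$, and iterates. I would split the analysis according to the shape of $\mu$: parts (i) and (ii) for $\mu = (n)$ and $\mu = (1^n)$ are essentially immediate from Kim-Rhoades; part (iii) for hook-shaped $\mu$ involves a low-degree $F_\mu$ that only supports $\lambda = (1^k)$ and $(2,1^k)$; and parts (iv) and (v) for $\mu$ with two large parts require separating the equal-parts case $\mu_1 = \mu_2$ (which produces three contributing $\lambda$'s) from the strict case $\mu_1 > \mu_2$ (which produces four).

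The main obstacle is bookkeeping: the Kim-Rhoades formula is indexed by a rich family of combinatorial objects, so reorganizing the resulting sum first by $s_\mu(\mathbf{z})$ and then by $s_{(a+b,a)}(u,v)$ produces a nested case analysis where the bidegree $(\deg_u,\deg_v)$ of each contribution must be tracked carefully. A secondary point, required to justify the ``and $0$ otherwise'' clause, is to confirm that for $\mu$ outside (i)--(v) every $c_{\lambda\mu}$ with $\lambda_1 \leq 2$ vanishes; this follows once each $F_\mu$ is fully expanded, since the remaining pairs $(\lambda,\mu)$ contribute a $0$ coefficient by the uniqueness of the basis expansion. With the expansion of $F_\mu$ in hand, matching against the displayed formula immediately yields the claimed values of $c_{\lambda\mu}$.
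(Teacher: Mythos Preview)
Your proposal is correct and follows essentially the same route as the paper: both work at $(k,j)=(0,2)$, use $s_\lambda(\,/\,u,v)=s_{\lambda'}(u,v)$, and read off the $c_{\lambda\mu}$ by expanding the $s_\mu(\mathbf{z})$-coefficient of $\Frob(R_n^{(0,2)};u,v)$ in the Schur basis $\{s_{(a+b,a)}(u,v)\}$. The only difference is one of packaging: for parts (i)--(iii) the paper cites \cite[Proposition 6.2]{KimRhoades2022} directly (which already records the coefficients $1$, $[n]_{u,v}$, and $[k+1]_{u,v}+uv[k]_{u,v}$ in Schur-ready form), and for parts (iv)--(v) it invokes \cite[Theorem 5.2]{Lentfer2025}, which has already carried out the Schur expansion of the remaining $F_\mu(u,v)$, rather than redoing the combinatorial bookkeeping you outline from the raw Kim--Rhoades formula.
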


\begin{proof}
Consider
\begin{equation}
        \Frob(R_n^{(0,2)}; u,v) = \sum_{\lambda \in P(0,2,n)} \sum_{\mu \vdash n}  c_{\lambda\mu} s_\lambda(0/u,v)s_\mu(\mathbf{z}).
    \end{equation}
    It follows from equation~\eqref{eq:superschur} that $s_\lambda(0/u,v) = s_{\lambda'}(u,v)$.
    From \cite[Proposition 6.2]{KimRhoades2022}, the coefficient of $s_{(n)}(\mathbf{z})$ is $1 = s_\varnothing(u,v) = s_\varnothing(0/u,v)$, the coefficient of $s_{(1^n)}(\mathbf{z})$ is $[n]_{u,v} = s_{(n-1)}(u,v) = s_{(1^{n-1})}(0/u,v)$, and for any $i \in \{1, \ldots, n-2\}$, the coefficient of $s_{(n-i,1^i)}(\mathbf{z})$ is $[i+1]_{u,v} + uv[i]_{u,v} = s_{(i)}(u,v) +s_{(i,1)}(u,v) = s_{(1^i)}(0/u,v) +s_{(2,1^{i-1})}(0/u,v)$.
    Similarly, from \cite[Theorem 5.2]{Lentfer2025} we determine $(iv)$ and $(v)$.

    Moreover, since \cite[Proposition 6.2]{KimRhoades2022} and \cite[Theorem 5.2]{Lentfer2025} together give the complete expansion of $\Frob(R_n^{(0,2)};u,v)$, all other coefficients $c_{\lambda\mu}$ where each part of $\lambda$ is at most $2$ are zero.
\end{proof}

Swanson--Wallach \cite[Corollary 5.8]{SwansonWallach1} found the bigraded multiplicity of the sign character in $R_n^{(1,1)}$, which can be expressed in the form (see \cite[equation~(3.14)]{Bergeron2020}):
\begin{equation}\label{eq:Swanson--Wallach}
    \langle \Frob(R_n^{(1,1)};q;u), s_{(1^n)}(\mathbf{z})\rangle= \sum_{d = 0}^{n-1} q^{\binom{n-d}{2}}\qbinom{n-1}{d}_q u^d.
\end{equation}
We wish to write this in terms of super Schur functions $s_\lambda(q/u)$.

\begin{lemma}\label{lem:1-1-hook}
    If $\lambda$ is $(a,1^b)$ for some $a \geq 1$ and $b \geq 0$, then $s_\lambda(q/u) = q^au^b + q^{a-1}u^{b+1}$. 
    If $\lambda$ is $\varnothing$, then $s_\lambda(q/u) = 1$.
    If $\lambda$ is not contained in a hook shape, then $s_\lambda(q/u) = 0$.
\end{lemma}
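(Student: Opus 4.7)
The plan is to evaluate the defining sum $s_\lambda(q/u) = \sum_{\nu \subseteq \lambda} s_\nu(q)\, s_{\lambda'/\nu'}(u)$ directly, using that both alphabets have size one. First I would restrict the range of summation using standard single-variable specializations: $s_\nu(q)$ vanishes unless $\nu$ is a single row $(m)$ for some $m \geq 0$, in which case it equals $q^m$; and $s_{\lambda'/\nu'}(u)$ vanishes unless $\lambda'/\nu'$ is a horizontal strip (equivalently, $\lambda/\nu$ is a vertical strip), in which case it equals $u^{|\lambda|-m}$. So only pairs with $\nu = (m)$ and $\lambda/(m)$ a vertical strip contribute, and each such pair contributes the monomial $q^m u^{|\lambda|-m}$.

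Next I would extract the constraint on $\lambda$. The vertical-strip condition requires every row of $\lambda/(m)$ to contain at most one cell. Applied to rows $i \geq 2$, this forces $\lambda_i \leq 1$, so $\lambda$ must be a hook $(a, 1^b)$; applied to row $1$, it forces $\lambda_1 - m \leq 1$, so $m \in \{a, a-1\}$ (subject to $m \geq 0$). The three cases of the lemma then drop out immediately: if $\lambda = \varnothing$, only $\nu = \varnothing$ contributes and yields $1$; if $\lambda = (a, 1^b)$ with $a \geq 1$, both $m = a$ and $m = a-1 \geq 0$ contribute, giving $q^a u^b + q^{a-1} u^{b+1}$; and if $\lambda_2 \geq 2$, no admissible $(m)$ exists and the sum is empty. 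The only step that is not entirely mechanical is the identification of the admissible $\nu$ via the vertical-strip condition, but this is a routine combinatorial check; I do not anticipate any genuine obstacle.
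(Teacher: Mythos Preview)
Your proposal is correct and is precisely the ``direct calculation'' the paper's proof alludes to: you evaluate the defining sum $s_\lambda(q/u)=\sum_{\nu\subseteq\lambda}s_\nu(q)s_{\lambda'/\nu'}(u)$ using the one-variable specializations and the vertical-strip condition, exactly as intended. There is nothing to add; your write-up simply spells out what the paper leaves implicit.
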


\begin{proof}
    Direct calculation.
\end{proof}

For a polynomial $p \in \C[q]$, let $\langle q^i \rangle p$ denote the coefficient of $q^i$ in $p$.
Define 
    \begin{equation}
        g_{i,d,n}:= \langle q^i \rangle \qbinom{n-2}{d}_q = |\{ \lambda \vdash i : \lambda \subseteq (n-2-d)^d\}|,
    \end{equation}
    where $(n-2-d)^d$ is the rectangular shape of width $n-2-d$ and height $d$.

\begin{proposition}
For all $n\geq 1$,
    \begin{equation}
        \langle \Frob(R_n^{(1,1)};q;u), s_{(1^n)}(\mathbf{z})\rangle = \sum_{d=0}^{n-1} \sum_{i \geq 0} g_{i,d,n}\, s_{(\binom{n-d}{2}+i,1^d)}(q/u).
    \end{equation}
\end{proposition}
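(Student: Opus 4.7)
The plan is to manipulate the right-hand side using Lemma~\ref{lem:1-1-hook} and the Swanson--Wallach formula~\eqref{eq:Swanson--Wallach}, then match coefficients of $q$ and $u$ via a $q$-Pascal identity. First, I would apply Lemma~\ref{lem:1-1-hook} to each summand $s_{(\binom{n-d}{2}+i,1^d)}(q/u)$. Whenever $\binom{n-d}{2}+i \geq 1$, the lemma expands this as $q^{\binom{n-d}{2}+i} u^d + q^{\binom{n-d}{2}+i-1} u^{d+1}$. The only degenerate case is $d = n-1$ with $i = 0$, but there $g_{0,n-1,n} = \langle q^0 \rangle \qbinom{n-2}{n-1}_q = 0$, so it does not contribute.

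Next, I would collect the coefficient of $u^d$ in the right-hand side. The summands indexed by $d$ contribute $q^{\binom{n-d}{2}}\sum_i g_{i,d,n}\, q^i$, while those indexed by $d-1$ (via their $q^{a-1} u^{d+1}$ term, now relabeled $u^d$) contribute $q^{\binom{n-d+1}{2}-1}\sum_i g_{i,d-1,n}\, q^i$. Since $\sum_i g_{i,d,n}\, q^i = \qbinom{n-2}{d}_q$ by definition of $g_{i,d,n}$, the total coefficient of $u^d$ is
\[
    q^{\binom{n-d}{2}}\qbinom{n-2}{d}_q + q^{\binom{n-d+1}{2}-1}\qbinom{n-2}{d-1}_q.
\]
The elementary identity $\binom{n-d+1}{2} - 1 = \binom{n-d}{2} + (n-d-1)$ factors out $q^{\binom{n-d}{2}}$, and then the standard $q$-Pascal identity
\[
    \qbinom{n-1}{d}_q = \qbinom{n-2}{d}_q + q^{n-d-1}\qbinom{n-2}{d-1}_q
\]
collapses the bracketed expression to $\qbinom{n-1}{d}_q$. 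Comparison with~\eqref{eq:Swanson--Wallach} term by term yields the claim.

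The main obstacle is bookkeeping at the boundaries: one must ensure there is no spurious $u^n$ term coming from $d = n-1$, and no phantom $d = -1$ contribution to $u^0$. Both reduce to the conventions $\qbinom{n-2}{n-1}_q = \qbinom{n-2}{-1}_q = 0$. A secondary subtlety is that Lemma~\ref{lem:1-1-hook} requires the first part of the hook to be positive; this is handled by noting that $\binom{n-d}{2}+i = 0$ forces $d = n-1$ and $i = 0$, precisely the case where $g_{0,n-1,n}$ vanishes, so the degenerate hook never actually appears with a nonzero coefficient.
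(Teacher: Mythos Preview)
Your proof is correct and follows exactly the paper's approach: expand each $s_{(\binom{n-d}{2}+i,1^d)}(q/u)$ via Lemma~\ref{lem:1-1-hook}, collect by powers of $u$, apply the $q$-Pascal identity, and match against~\eqref{eq:Swanson--Wallach}. You are in fact more explicit than the paper about the boundary conventions $\qbinom{n-2}{-1}_q=\qbinom{n-2}{n-1}_q=0$ and the degenerate $a=0$ hook; the only wrinkle is that for $n=1$ one has $g_{0,0,1}=\langle q^0\rangle\qbinom{-1}{0}_q=1$ rather than $0$, but there the whole identity is the trivial $1=1$.
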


\begin{proof}
    For $n=1$, both sides equal $1$. Assume $n \geq 2$.
    By Lemma~\ref{lem:1-1-hook}, we write
    \begin{align}
    \begin{aligned}[c]
        \sum_{d=0}^{n-1} \sum_{i \geq 0} g_{i,d,n}\, s_{(\binom{n-d}{2}+i,1^d)}(q/u) &= \sum_{d=0}^{n-1} \sum_{i \geq 0} \left( q^{\binom{n-d}{2}+i} u^d + q^{\binom{n-d}{2}+i-1}u^{d+1} \right)\langle q^i \rangle \qbinom{n-2}{d}_q\\
        &= \sum_{d=0}^{n-1} u^d \left( \sum_{i \geq 0}  q^{\binom{n-d}{2}+i}\langle q^i \rangle \qbinom{n-2}{d}_q +\sum_{i \geq 0} q^{\binom{n-d+1}{2}+i-1} \langle q^i \rangle \qbinom{n-2}{d-1}_q \right)\\
        &= \sum_{d=0}^{n-1} u^d q^{\binom{n-d}{2}}\left( \qbinom{n-2}{d}_q +q^{n-d-1} \qbinom{n-2}{d-1}_q \right)\\
        &= \sum_{d=0}^{n-1} u^d q^{\binom{n-d}{2}}\qbinom{n-1}{d}_q,
    \end{aligned}
    \end{align}
    where the last line is Pascal's identity. Then apply equation~\eqref{eq:Swanson--Wallach}.
\end{proof}

As a consequence of this proposition and Proposition~\ref{prop:universal-series}, we deduce the following.

\begin{corollary}\label{cor:coeff-1-1-sign}
    For $\lambda$ a partition contained in a hook shape, the coefficients $c_{\lambda, (1^n)}$ defined by
    \begin{equation}
        \Frob(R_n^{(\infty,\infty)}; \mathbf{q};\mathbf{u}) = \sum_{\lambda \in P(\infty,\infty,n)} \sum_{\mu \vdash n} c_{\lambda\mu} s_\lambda(\mathbf{q}/\mathbf{u})s_\mu(\mathbf{z})
    \end{equation}
    are given by:
    \begin{equation}
        c_{\lambda, (1^n)} = \begin{cases}
            g_{i,d,n} & \text{ if there exist }i\geq 0,\, n-1 \geq d \geq 0 \text{ such that } \lambda = (\binom{n-d}{2} + i, 1^d),\\
            0 & \text{ otherwise.}
        \end{cases}
    \end{equation}
\end{corollary}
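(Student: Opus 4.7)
The plan is to invoke Proposition~\ref{prop:universal-series} with $(k,j)=(1,1)$, extract the sign-isotypic component of $\Frob(R_n^{(1,1)};q;u)$, and then match its super Schur expansion against the formula supplied by the preceding proposition.

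First, I would write
$$\langle \Frob(R_n^{(1,1)};q;u),\, s_{(1^n)}(\mathbf{z})\rangle = \sum_{\lambda \in P(1,1,n)} c_{\lambda,(1^n)}\, s_\lambda(q/u),$$
where the universal coefficients come from Proposition~\ref{prop:universal-series}, and $P(1,1,n)$ consists precisely of the hook-shaped partitions $\lambda$ with $\ell(\lambda) \leq n$ (those satisfying $\lambda_2 \leq 1$). The preceding proposition provides a second expansion of the same series as $\sum_{d=0}^{n-1}\sum_{i \geq 0} g_{i,d,n}\, s_{(\binom{n-d}{2}+i,1^d)}(q/u)$, reducing the corollary to coefficient-matching in the family $\{s_\lambda(q/u) : \lambda \in P(1,1,n)\}$.

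Next I would establish the linear independence of this family. By Lemma~\ref{lem:1-1-hook}, $s_{(a,1^b)}(q/u) = q^a u^b + q^{a-1}u^{b+1}$ for $a \geq 1$ and $s_\varnothing(q/u) = 1$, so the leading monomial of each hook super Schur function (under degree-lex with $q>u$) is $q^a u^b$, which is distinct across distinct hooks. Hence coefficients in the hook super Schur basis are uniquely determined.

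Finally I would read off the coefficients. A hook $\lambda=(a,1^b)$ with $a \geq 1$ coincides with $(\binom{n-d}{2}+i,1^d)$ for nonnegative $i,d$ (with nonzero first part) if and only if $d=b$ and $i=a-\binom{n-b}{2}\geq 0$; in that case $c_{\lambda,(1^n)} = g_{i,d,n}$, and otherwise $c_{\lambda,(1^n)} = 0$ (this also covers the cases $\lambda=\varnothing$ for $n\geq 2$ and hooks with $a<\binom{n-b}{2}$). This reproduces the piecewise formula in the corollary. The only real content is the linear independence of hook super Schur functions, which follows at once from Lemma~\ref{lem:1-1-hook}; no substantial obstacle is anticipated.
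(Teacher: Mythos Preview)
Your proposal is correct and follows exactly the route the paper intends: the corollary is stated without proof because it follows immediately from the preceding proposition by equating the two super Schur expansions of $\langle \Frob(R_n^{(1,1)};q;u),\, s_{(1^n)}(\mathbf{z})\rangle$. Your explicit verification via Lemma~\ref{lem:1-1-hook} that the hook super Schur functions $\{s_\lambda(q/u):\lambda\in P(1,1,n)\}$ are linearly independent simply makes precise a step the paper leaves implicit.
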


\begin{remark}\label{rmk:tightness}
    Corollary~\ref{cor:coeff-1-1-sign} tells us that $c_{(\binom{n}{2}),(1^n)}=1$. 
    Consider $R_n^{(0,J)}$. For $J < \binom{n}{2}$, we have
    \begin{equation}
        s_{(\binom{n}{2})}(0/u_1,\ldots,u_J)= s_{(1^{\binom{n}{2}})}(u_1,\ldots,u_J) = 0.
    \end{equation}
    Thus the bound of $J \geq \binom{n}{2}$ is tight in Corollary~\ref{cor:diagonal-supersymmetry-threshold}.

    Proposition~\ref{prop:coefficients-KR}$(ii)$ tells us that $c_{(1^{n-1}), (1^n)} = 1$.
    Consider $R_n^{(K,0)}$. For $K < n-1$, we have
    \begin{equation}
        s_{(1^{n-1})}(q_1,\ldots,q_K)= 0.
    \end{equation}
    Thus the bound of $K \geq n-1$ is tight in Corollary~\ref{cor:diagonal-supersymmetry-threshold}.
\end{remark}

Now we are ready to prove Proposition~\ref{prop:negative}.

\begin{proof}[Proof of Proposition~\ref{prop:negative}]\
    \begin{enumerate}
        \item $R_n^{(1,0)} \nrightarrow  R_n^{(0,1)}$:
        By Proposition~\ref{prop:coefficients-KR}$(ii)$, $c_{\hat{\lambda}\hat{\mu}} =c_{(1^{n-1}), (1^n)} = 1$.
        For all $n \geq 3$, $\hat{\lambda}=(1^{n-1})$ satisfies $\hat{\lambda}_{1}\leq 1$ and $\hat{\lambda}_{2} > 0$.
        \item $R_n^{(0,1)} \nrightarrow  R_n^{(1,0)}$:
        By Corollary~\ref{cor:coeff-1-1-sign}, $c_{\hat{\lambda}\hat{\mu}} = c_{(\binom{n}{2}),(1^n)}=1$.
        For all $n \geq 3$, $\hat{\lambda} = (\binom{n}{2})$ satisfies $\hat{\lambda}_{2}\leq 0$ and $\hat{\lambda}_{1}> 1$.
        \item $R_n^{(2,0)} \nrightarrow  R_n^{(0,2)}$:
        By Proposition~\ref{prop:coefficients-KR}$(ii)$, $c_{\hat{\lambda}\hat{\mu}} =c_{(1^{n-1}), (1^n)} = 1$.
        For all $n \geq 4$, $\hat{\lambda}=(1^{n-1})$ satisfies $\hat{\lambda}_{1}\leq 2$ and $\hat{\lambda}_{3} > 0$.
        \item $R_n^{(0,2)} \nrightarrow  R_n^{(2,0)}$: 
        By Corollary~\ref{cor:coeff-1-1-sign}, $c_{\hat{\lambda}\hat{\mu}} = c_{(\binom{n}{2}),(1^n)}=1$.
        For all $n \geq 3$, $\hat{\lambda} = (\binom{n}{2})$ satisfies $\hat{\lambda}_{3}\leq 0$ and $\hat{\lambda}_{1} > 2$.
        \item $R_n^{(2,0)} \nrightarrow  R_n^{(1,1)}$:
        By Corollary~\ref{cor:coeff-1-1-sign}, $c_{\hat{\lambda}\hat{\mu}} = c_{(\binom{n-2}{2},1,1),(1^n)} = 1$.
        For all $n \geq 4$, $\hat{\lambda} = (\binom{n-2}{2},1,1)$ satisfies $\hat{\lambda}_2 \leq 1$ and $\hat{\lambda}_{3} > 0$.
        \item $R_n^{(0,2)} \nrightarrow  R_n^{(1,1)}$: 
        By Corollary~\ref{cor:coeff-1-1-sign}, $c_{\hat{\lambda}\hat{\mu}} = c_{(\binom{n}{2}),(1^n)}=1$.
        For all $n \geq 3$, $\hat{\lambda} = (\binom{n}{2})$ satisfies $\hat{\lambda}_2 \leq 1$ and $\hat{\lambda}_1 > 2$.
        \item $R_n^{(1,1)} \nrightarrow  R_n^{(0,2)}$:
        By Proposition~\ref{prop:coefficients-KR}$(iv)$, $c_{\hat{\lambda}\hat{\mu}} = c_{(2,2,1^{n-5}),(2,2,1^{n-4})} =1$.
        For all $n\geq 5$, $\hat{\lambda} =(2,2,1^{n-5})$ satisfies $\hat{\lambda}_1 \leq 2$ and $\hat{\lambda}_2 > 1$.
        \item $R_n^{(1,1)} \nrightarrow  R_n^{(2,0)}$: 
        Lee, Li, and Loehr \cite[Appendix]{LeeLiLoehr} determine the $\mathfrak{sl}(2)$-string structure for the homogeneous components of the $q,t$-Catalan number $C_n(q,t) = \langle \Frob(R_n^{(2,0)};q,t),s_{(1^n)}(\mathbf{z})\rangle$ of total degree greater than or equal to $ \binom{n}{2} - 9$. 
        In particular, the homogeneous component of total degree $\binom{n}{2}-2$ is shown to be $s_{(\binom{n}{2}-3,1)}(q,t) + s_{(\binom{n}{2}-4,2)}(q,t)$ for large enough $n$.
        This implies that $c_{\hat{\lambda}\hat{\mu}} = c_{(\binom{n}{2}-4,2),(1^n)} = 1$.
        For large enough $n$, $\hat{\lambda}= (\binom{n}{2}-4,2)$ satisfies $\hat{\lambda}_3 \leq 0$ and $\hat{\lambda}_2 > 1$. 
    \end{enumerate}
\end{proof}

\section{Cancellation and the Hilbert series for the symmetric group}\label{sec:hilbert}

In this section, we discuss further applications of diagonal supersymmetry for the symmetric group. 
In particular, we consider the Hilbert series, and obtain a diagonal supersymmetry result for Hilbert series in Proposition~\ref{prop:hilbert-version}. 
Super Schur functions satisfy a cancellation property (equation~\eqref{eq:cancellation}) which has no analogue for classical Schur functions. 
We are able to leverage cancellation in Proposition~\ref{prop:general-cancellation} and then explain how several conjectures or results on symmetric functions in the literature relate to conjectures or results on the Frobenius and Hilbert series of coinvariant rings.

The multigraded Hilbert series is defined by 
\begin{equation} \Hilb(R_n^{(k,j)}; \mathbf{q}; \mathbf{u}) := \sum_{r_1,\ldots,r_k, s_1, \ldots, s_j \geq 0}\dim \left((R_n^{(k,j)})_{r_1,\ldots,r_k, s_1, \ldots, s_j} \right)q_1^{r_1} \cdots q_k^{r_k}u_1^{s_1} \cdots u_j^{s_j}.\end{equation}
Recall that we can recover the Hilbert series from the Frobenius series via: \begin{equation}\langle \Frob(R_n^{(k,j)}; \mathbf{q}; \mathbf{u}), h_{(1^n)}(\mathbf{z}) \rangle = \Hilb(R_n^{(k,j)}; \mathbf{q}; \mathbf{u}).\end{equation}

We give a Hilbert series version of Corollary~\ref{cor:main-theorem}.

\begin{proposition}\label{prop:hilbert-version}
    Fix a positive integer $n$. For partitions $\lambda$ with $\ell(\lambda) \leq n$, there exist nonnegative integer coefficients $c_\lambda(n)$ such that for any $(k,j)$, the multigraded Hilbert series of $R_n^{(k,j)}$ is
\begin{equation}
    \Hilb(R_n^{(k,j)}; \mathbf{q};\mathbf{u}) = \sum_{\lambda \in P(k,j,n)} c_{\lambda}(n) s_\lambda(\mathbf{q}/\mathbf{u}).
\end{equation}
\end{proposition}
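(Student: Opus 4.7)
The plan is to deduce Proposition~\ref{prop:hilbert-version} directly from Corollary~\ref{cor:main-theorem} by specializing the Frobenius-series identity to a Hilbert-series identity. The tool is the standard relation $\Hilb(R_n^{(k,j)}; \mathbf{q}; \mathbf{u}) = \langle \Frob(R_n^{(k,j)}; \mathbf{q}; \mathbf{u}), h_{(1^n)}(\mathbf{z}) \rangle$ recalled just before the proposition, combined with the classical identity $\langle s_\mu(\mathbf{z}), h_{(1^n)}(\mathbf{z}) \rangle = K_{\mu,(1^n)} = f^\mu$, where $f^\mu$ denotes the number of standard Young tableaux of shape $\mu$.

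First I would start from the expansion provided by Corollary~\ref{cor:main-theorem},
\begin{equation*}
\Frob(R_n^{(k,j)}; \mathbf{q};\mathbf{u}) = \sum_{\lambda \in P(k,j,n)} \sum_{\mu \vdash n} c_{\lambda\mu}\, s_\lambda(\mathbf{q}/\mathbf{u})\, s_\mu(\mathbf{z}),
\end{equation*}
and take the Hall inner product against $h_{(1^n)}(\mathbf{z})$ in the $\mathbf{z}$ alphabet only. Since the super Schur functions $s_\lambda(\mathbf{q}/\mathbf{u})$ do not involve $\mathbf{z}$, they pass through the inner product, so the double sum collapses to
\begin{equation*}
\Hilb(R_n^{(k,j)}; \mathbf{q};\mathbf{u}) = \sum_{\lambda \in P(k,j,n)} \Bigl(\sum_{\mu \vdash n} c_{\lambda\mu}\, f^\mu\Bigr)\, s_\lambda(\mathbf{q}/\mathbf{u}).
\end{equation*}
Defining $c_\lambda := \sum_{\mu \vdash n} c_{\lambda\mu}\, f^\mu$ then yields the claimed expansion.

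Finally I would verify the two properties asserted in the statement. Each $c_\lambda$ is a nonnegative integer because it is a finite sum of products of the nonnegative integers $c_{\lambda\mu}$ (supplied by Theorem~\ref{thm:new-main-first-part}) and $f^\mu$. Independence of $c_\lambda$ from $(k,j)$ is inherited directly from the corresponding property of each $c_{\lambda\mu}$ established in Theorem~\ref{thm:G-main-theorem}. I do not expect any substantive obstacle here: the heavy lifting was already carried out in proving Theorem~\ref{thm:G-main-theorem}, and the present proposition is essentially its image under specialization from the Frobenius characteristic to the total dimension.
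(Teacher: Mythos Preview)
Your proposal is correct and follows essentially the same approach as the paper: both apply $\langle -, h_{(1^n)}(\mathbf{z})\rangle$ to the Frobenius-series expansion from Corollary~\ref{cor:main-theorem}, define $c_\lambda = \sum_{\mu \vdash n} c_{\lambda\mu}\,\langle s_\mu(\mathbf{z}), h_{(1^n)}(\mathbf{z})\rangle$, and read off nonnegativity and $(k,j)$-independence from the corresponding properties of the $c_{\lambda\mu}$. The only cosmetic difference is that you explicitly identify $\langle s_\mu, h_{(1^n)}\rangle$ with $f^\mu$, whereas the paper leaves it unexpanded.
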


\begin{proof}
    By Corollary~\ref{cor:main-theorem},
    \begin{align}
    \begin{aligned}
        \Hilb(R_n^{(k,j)}; \mathbf{q}; \mathbf{u}) &= \langle \Frob(R_n^{(k,j)}; \mathbf{q}; \mathbf{u}), h_{(1^n)}(\mathbf{z}) \rangle\\
        &= \left\langle \sum_{\lambda \in P(k,j,n)}\sum_{\mu \vdash n}  c_{\lambda\mu} s_\lambda(\mathbf{q}/\mathbf{u})s_\mu(\mathbf{z}), h_{(1^n)}(\mathbf{z}) \right\rangle\\
        &= \sum_{\lambda \in P(k,j,n)} s_\lambda(\mathbf{q}/\mathbf{u}) \sum_{\mu \vdash n} c_{\lambda\mu} \langle s_\mu(\mathbf{z}), h_{(1^n)}(\mathbf{z}) \rangle
    \end{aligned}
    \end{align}
    and the formula follows upon setting $c_\lambda(n) := \sum_{\mu \vdash n} c_{\lambda\mu} \langle s_\mu(\mathbf{z}), h_{(1^n)}(\mathbf{z})\rangle$. 
    Since $\langle s_\mu(\mathbf{z}), h_{(1^n)}(\mathbf{z})\rangle$ and $c_{\lambda\mu}$ are both nonnegative integers and independent of $(k,j)$, so are the coefficients $c_\lambda(n)$. 
\end{proof}

Now we show a general result on Frobenius and Hilbert series involving some specific evaluations.

\begin{proposition}\label{prop:general-cancellation}
    For any $n\geq 1$ and for any $0 \leq m \leq \min(k,j)$,
    \begin{enumerate}[(i)]
        \item     $\Frob(R_n^{(k,j)}; \mathbf{q}; \mathbf{u})|_{q_k = -u_j, \ldots, q_{k-m+1} = -u_{j-m+1}} = \Frob(R_n^{(k-m,j-m)};q_1,\ldots,q_{k-m};u_1,\ldots,u_{j-m}),$
    \item   $\Hilb(R_n^{(k,j)}; \mathbf{q}; \mathbf{u})|_{q_k = -u_j, \ldots, q_{k-m+1} = -u_{j-m+1}} = \Hilb(R_n^{(k-m,j-m)};q_1,\ldots,q_{k-m};u_1,\ldots,u_{j-m}).$
    \end{enumerate}
    More generally, the result holds for any $m$ pairs $(q_{i_1}, u_{h_1}),\ldots, (q_{i_m}, u_{h_m})$, with the $\{i_1,\ldots, i_m\}$ pairwise distinct and the $\{h_1,\ldots, h_m\}$ pairwise distinct. That is,
\begin{align*}
            \Frob&(R_n^{(k,j)}; \mathbf{q}; \mathbf{u})|_{q_{i_a} = -u_{h_a} \text{ for all } a \in \{1,\ldots,m\}}\\
            &= \Frob(R_n^{(k-m,j-m)};q_1,\ldots, \widehat{q_{i_1}}, \ldots,\widehat{q_{i_m}}, \ldots,  q_{k};u_1,\ldots, \widehat{u_{h_1}}, \ldots,\widehat{u_{h_m}}, \ldots,u_{j}),
        \end{align*}
and \begin{align*}
            \Hilb&(R_n^{(k,j)}; \mathbf{q}; \mathbf{u})|_{q_{i_a} = -u_{h_a} \text{ for all } a \in \{1,\ldots,m\}}\\
            &= \Hilb(R_n^{(k-m,j-m)};q_1,\ldots, \widehat{q_{i_1}}, \ldots,\widehat{q_{i_m}}, \ldots,  q_{k};u_1,\ldots, \widehat{u_{h_1}}, \ldots,\widehat{u_{h_m}}, \ldots,u_{j}).
    \end{align*}
\end{proposition}

\begin{proof}
    The first claim follows immediately from Corollary~\ref{cor:main-theorem} and $m$ applications of cancellation (equation~\eqref{eq:cancellation}). The second claim follows from the first after applying $\langle -, h_{(1^n)}(\mathbf{z})\rangle$.
    The same proof holds in the more general case since the multigraded Frobenius series is symmetric in $q_1,\ldots,q_k$ and in $u_1,\ldots,u_j$.
\end{proof}

We now discuss some connections with other results and conjectures in the literature.
Sagan--Swanson conjectured \cite{SaganSwanson2024} and Rhoades--Wilson proved \cite{RhoadesWilson2023} that
\begin{equation}\label{eq:Rhoades-Wilson}
    \Hilb(R_n^{(1,1)};q;u) = \sum_{d=0}^n [d]_q! \Stir_q(n,d) u^{n-d},
\end{equation}
where the $q$-Stirling number $\Stir_q(n,d)$ is defined by the recurrence relation $\Stir_q(n,d) = [d]_q \Stir_q(n-1,d) + \Stir_q(n-1,d-1)$ with initial conditions $\Stir_q(0,d) = \delta_{d,0}$.

We give a short proof of the following result, originally proven by Sagan--Swanson with a sign-reversing involution. 

\begin{corollary}[\!\!{\cite[Theorem 5.5]{SaganSwanson2024}}]
    For $n\geq 0$, we have
    \begin{equation}
        \sum_{d=0}^n[d]_q!\Stir_q(n,d) (-q)^{n-d} =1.
    \end{equation}
\end{corollary}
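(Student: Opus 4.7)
The plan is to deduce this identity by specializing the Rhoades--Wilson formula in equation~\eqref{eq:Rhoades-Wilson} using the cancellation property from Proposition~\ref{prop:general-cancellation}(ii).

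First I would apply Proposition~\ref{prop:general-cancellation}(ii) with $k = j = m = 1$. This gives
\begin{equation*}
    \Hilb(R_n^{(1,1)}; q; u)|_{q = -u} = \Hilb(R_n^{(0,0)}),
\end{equation*}
and since $R_n^{(0,0)} = \C[\varnothing]/\langle \varnothing \rangle = \C$ (the coinvariant ring with no variables), its Hilbert series is simply $1$.

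Next, I would substitute $u = -q$ (equivalently $q = -u$, the same specialization up to renaming) into the Rhoades--Wilson formula
\begin{equation*}
    \Hilb(R_n^{(1,1)};q;u) = \sum_{d=0}^n [d]_q! \Stir_q(n,d) u^{n-d}
\end{equation*}
to obtain
\begin{equation*}
    \sum_{d=0}^n [d]_q! \Stir_q(n,d) (-q)^{n-d} = \Hilb(R_n^{(1,1)}; q; u)|_{u = -q} = 1.
\end{equation*}

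There is essentially no obstacle here; the only subtlety is verifying that $R_n^{(0,0)} \cong \C$ so that its Hilbert series is $1$, which is immediate from the definition in equation~\eqref{eq:coinv} applied with no variables. The entire proof is a one-line specialization of the cancellation principle (the $k=j=m=1$ case of Proposition~\ref{prop:general-cancellation}) applied to the known formula~\eqref{eq:Rhoades-Wilson}, which beautifully explains the combinatorial identity of Sagan--Swanson as arising from the Lie superalgebra symmetry of the bosonic-fermionic coinvariant ring.
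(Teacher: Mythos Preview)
Your proposal is correct and essentially identical to the paper's own proof: the paper applies Proposition~\ref{prop:general-cancellation} to obtain $\Hilb(R_n^{(1,1)};q;u)|_{q=-u} = \Hilb(R_n^{(0,0)}) = 1$ and then invokes equation~\eqref{eq:Rhoades-Wilson}. Your additional remark that $R_n^{(0,0)} \cong \C$ is the only elaboration beyond what the paper writes, and it is harmless.
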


\begin{proof}
    By Proposition~\ref{prop:general-cancellation}, $\Hilb(R_n^{(1,1)};q;u)|_{q=-u} = \Hilb(R_n^{(0,0)}) = 1$. The result follows by equation~\eqref{eq:Rhoades-Wilson}.
\end{proof}

We briefly recall the Delta and nabla operators. The modified Macdonald polynomials $\tilde{H}_\lambda$ form a basis for $\Lambda_{\Q(q,t)}$, the ring of symmetric functions with coefficients in $\Q(q,t)$ (see \cite{HaimanCDM} for example). 
Let 
\begin{equation}
    B_\mu = \sum_{c \in \mu} q^{a'_\mu(c)} t^{l'_\mu(c)} \text{ and } 
    T_\mu = \prod_{c \in \mu} q^{a'_\mu(c)} t^{l'_\mu(c)},
\end{equation}
where the sum or product is taken over all boxes $c$ in the Ferrers diagram of the partition $\mu$. The co-arm statistic $a'_\mu(c)$ is the number of boxes strictly to the left of $c$ in $\mu$ and the co-leg statistic $l'_\mu(c)$ is the number of boxes strictly below $c$ in $\mu$ (in the French notation for partitions: parts are left-justified with the largest part on the bottom).

Nabla $\nabla$ \cite{BergeronGarsia} is defined to be an eigenoperator on the Macdonald basis: 
\begin{equation}
    \nabla \tilde{H}_\mu = T_\mu \tilde{H}_\mu \text{ for all $\mu$}.
\end{equation}

Let $f$ be any symmetric function in $\Lambda_{\Q(q,t)}$. The more general Delta operators $\Delta_f$ and $\Delta_f'$ \cite{HaglundRemmelWilson2018, BergeronGarsiaHaimanTesler} are eigenoperators on the Macdonald basis defined by 
\begin{equation}
    \Delta_f \tilde{H}_\mu = f[B_\mu]\tilde{H}_\mu \text{ and } \Delta_f' \tilde{H}_\mu = f[B_\mu-1]\tilde{H}_\mu \text{ for all $\mu$}.
\end{equation}
Here, $f[\cdot]$ denotes plethystic substitution (see for example \cite{HaimanCDM}).

Zabrocki conjectured \cite{Zabrocki2019} that
\begin{equation}\label{eq:Zabrocki-conjecture}
    \Frob(R_n^{(2,1)};q,t;u) = \sum_{d=0}^{n-1} u^d \Delta'_{e_{n-d-1}}e_n(\mathbf{z}).
\end{equation}
D'Adderio, Iraci, and Vanden Wyngaerd \cite[Theorem 4.11]{DIVW2021-Delta-square} proved that
\begin{equation}\label{eq:DIVW-identity}
    \sum_{d=0}^{n-1} (-q)^d \Delta'_{e_{n-d-1}}e_n(\mathbf{z}) = \nabla e_n(\mathbf{z})|_{q=0}.
\end{equation}
Haiman \cite{Haiman2002} showed that $\Frob(R_n^{(2,0)};q,t) = \nabla e_n(\mathbf{z}).$
Combining this with Proposition~\ref{prop:general-cancellation}, we have
\begin{equation}\label{eq:frob(1,0)}
    \Frob(R_n^{(2,1)};q,t;u)|_{q=-u} = \Frob(R_n^{(1,0)};t) = \nabla e_n(\mathbf{z})|_{q=0}.
\end{equation}
So if equation~\eqref{eq:Zabrocki-conjecture} is true, then equation~\eqref{eq:DIVW-identity} follows from  equation~\eqref{eq:frob(1,0)}.

Moving to the level of Hilbert series, Zabrocki's conjecture implies that
\begin{equation}\label{eq:Zabrocki-conjecture-hilbert}
    \Hilb(R_n^{(2,1)};q,t;u) = \sum_{d=0}^{n-1} \langle u^d \Delta'_{e_{n-d-1}}e_n(\mathbf{z}), h_{(1^n)}(\mathbf{z}) \rangle.
\end{equation}
From equation~\eqref{eq:DIVW-identity}, Corteel, Josuat-Verg\`es, and Vanden Wyngaerd \cite[equation~(6)]{CorteelJVVW} derived
\begin{equation}\label{eq:CJVVW(6)}
    \sum_{d=0}^{n-1} \langle u^d \Delta'_{e_{n-d-1}}e_n(\mathbf{z}), h_{(1^n)}(\mathbf{z}) \rangle\bigg|_{q=-u=-1} = [n]_t!.
\end{equation}
From Proposition~\ref{prop:general-cancellation}, we have
\begin{equation}\label{eq:hilb(1,0)}
    \Hilb(R_n^{(2,1)};q,t;u)|_{q=-u} = \Hilb(R_n^{(1,0)};t) = [n]_t!,
\end{equation}
where the second equality is due to Artin \cite{Artin}. 
So if equation~\eqref{eq:Zabrocki-conjecture-hilbert} is true, then equation~\eqref{eq:CJVVW(6)} follows from  equation~\eqref{eq:hilb(1,0)}.

D'Adderio, Iraci, and Vanden Wyngaerd \cite{DadderioIraciVandenWyngaerd2021} introduced the Theta operators $\Theta_f: \Lambda_{\Q(q,t)} \to \Lambda_{\Q(q,t)}$, in connection with their work on the coinvariant ring $R_n^{(2,2)}$, with two sets of commuting variables and two sets of anticommuting variables.
Like $\Delta_f$, the Theta operators depend on a choice of a symmetric function $f \in \Lambda_{\Q(q,t)}$. 
Unlike $\Delta_f$, they are not eigenoperators for the Macdonald polynomials $\tilde{H}_\lambda$. 
Corteel, Josuat-Verg\`es, and Vanden Wyngaerd \cite[Section 6.4]{CorteelJVVW} conjectured that
\begin{equation}\label{eq:CJVVW-conj}
    \langle\Theta_{e_d}\Theta_{e_\ell} \nabla e_{n-d-\ell}(\mathbf{z}), h_{(1^n)}(\mathbf{z})\rangle|_{q=-1}
\end{equation}
is $t$-positive. D'Adderio, Iraci, and Vanden Wyngaerd \cite{DadderioIraciVandenWyngaerd2021} conjectured that
\begin{equation}
\label{eq:thetaconj} \Frob(R_n^{(2,2)}; q,t;u,v) = \sum_{\substack{d,\ell \geq 0,\\ d + \ell < n}} u^d v^\ell\Theta_{e_d}\Theta_{e_\ell}\nabla e_{n-d-\ell}(\mathbf{z}),
\end{equation}
which implies that
\begin{equation}
\label{eq:thetaconj-hilb} \Hilb(R_n^{(2,2)}; q,t;u,v) = \sum_{\substack{d,\ell \geq 0,\\ d + \ell < n}} u^d v^\ell \langle \Theta_{e_d}\Theta_{e_\ell}\nabla e_{n-d-\ell}(\mathbf{z}), h_{(1^n)}(\mathbf{z})\rangle.
\end{equation}
From Proposition~\ref{prop:general-cancellation}, we have
\begin{equation}\label{eq:hilb(1,1)}
    \Hilb(R_n^{(2,2)};q,t;u,v)|_{q=-v=-1} = \Hilb(R_n^{(1,1)};t;u),
\end{equation}
which is clearly $t$-positive. Thus if equation~\eqref{eq:thetaconj-hilb} is true, then
\begin{equation}
    \sum_{\substack{d,\ell \geq 0,\\ d + \ell < n}} u^d \langle\Theta_{e_d}\Theta_{e_\ell} \nabla e_{n-d-\ell}(\mathbf{z}), h_{(1^n)}(\mathbf{z})\rangle|_{q=-1}
\end{equation}
is $t$-positive. By considering the components graded by $u$-degree, this would imply that, for each $0 \leq d \leq n-1$, the following is $t$-positive:
\begin{equation}
    \sum_{0 \leq \ell < n-d} \langle\Theta_{e_d}\Theta_{e_\ell} \nabla e_{n-d-\ell}(\mathbf{z}), h_{(1^n)}(\mathbf{z})\rangle|_{q=-1},
\end{equation}
which is a slightly weaker result than equation~\eqref{eq:CJVVW-conj} being $t$-positive.

\section*{Acknowledgements}

The author would like to thank Fran\c{c}ois Bergeron, Sylvie Corteel, Nicolle Gonz\'alez, Mark Haiman, Eric Jankowski, Pablo Ocal, Brendon Rhoades, Chris Ryba, Vera Serganova, and Josh Swanson for helpful conversations or feedback.
Thank you to the referees for their valuable feedback.
The author was partially supported by the National Science Foundation Graduate Research Fellowship DGE-2146752.

\bibliographystyle{amsplain}
\bibliography{biblio}

\end{document}